\documentclass[a4paper,12pt,onecolumn]{article}


\usepackage[vmargin=2cm,hmargin=2cm,headheight=14.5pt,top=2cm,headsep=.5cm]{geometry}
\usepackage[utf8]{inputenc}
\usepackage{bm}
\usepackage{empheq}
\usepackage{stackrel}
\usepackage{cases}
\usepackage{mathtools}
\usepackage{amsthm,amsmath,amscd}
\usepackage{makeidx}
\usepackage[charter]{mathdesign}
\usepackage{tikz-cd}
\tikzcdset{every label/.append style = {font = \small}}

\usepackage{graphicx}
\DeclareMathSizes{12}{12}{8}{6}

\usepackage{cite}
\usepackage{url}
\usepackage[ddmmyy]{datetime}

\usepackage{marvosym}
\newcommand*{\footnotemarkcolor}{red}
\makeatletter
\renewcommand*{\@makefnmark}{\hbox{\@textsuperscript{%
 \color{\footnotemarkcolor}\normalfont\@thefnmark}}}
\makeatother
\makeatletter
\def\@fnsymbol#1{\ensuremath{\ifcase#1\or \text{\Mercury}
\or \text{\Venus} \or \text{\Earth} \or
 \text{\Jupiter} \or \text{\Saturn} \or \text{\Neptune} \or \text{\Uranus} \or \text{\Pluto}
 \or \text{\Moon} \or \text{\Sun}\or \text{\Mercury}
 \or \text{\Venus} \or \text{\Earth} \or
 \text{\Jupiter} \or \text{\Saturn} \or \text{\Neptune} \or \text{\Uranus} \or \text{\Pluto}
 \or \text{\Moon} \or \text{\Sun}\or \text{\Mercury}
 \or \text{\Venus} \or \text{\Earth} \or
 \text{\Jupiter} \or \text{\Saturn} \or \text{\Neptune} \or \text{\Uranus} \or \text{\Pluto}
 \or \text{\Moon} \or \text{\Sun}
\else\@ctrerr\fi}}%
\makeatother


\newtheoremstyle{ptheorem}{1em}{0em}{\itshape}{}{\bfseries}{.}{.5em}{\thmname{#1}\thmnumber{ #2}\thmnote{ (\hspace{-.01pt}{#3})}}

\theoremstyle{ptheorem}

\newtheorem{thm}{Theorem}[section]

\newtheorem{lem}[thm]{Lemma}

\newtheoremstyle{hdef}{1em}{0em}{}{}{\bfseries}{.}{.5em}{\thmname{#1}\thmnumber{ #2}\thmnote{ (\hspace{-.01pt}{#3})}}
\theoremstyle{hdef}

\newtheorem{dfn}[thm]{Definition}
\newtheorem{rem}[thm]{Remark}

\makeatletter
\newtheoremstyle{premark}{1em}{0em}{
\addtolength{\@totalleftmargin}{1.5em}
\addtolength{\linewidth}{-1.5em}
\parshape 1 1.5em \linewidth}{}{\scshape}{.}{.5em}{}
\makeatother

\theoremstyle{premark}

\numberwithin{equation}{section}
\numberwithin{figure}{section}



\DeclareMathOperator{\Id}{Id}

\DeclareMathOperator{\dif}{d}


\newcommand{\cC}{{\mathcal C}}

\newcommand{\cL}{{\mathcal L}}

\newcommand{\bN}{{\mathbb N}}

\newcommand{\bR}{{\mathbb R}}

\renewcommand{\a}{\alpha}

\renewcommand{\l}{\lambda}


\newcommand{\ol}{\overline}

\renewcommand{\(}{\left(}
\renewcommand{\)}{\right)}

\newcommand{\til}{\widetilde}
\newcommand{\Lsp}[1]{\operatorname{L^{#1}}}

%
\parskip=.5em

\allowdisplaybreaks

\begin{document}
\title{Existence of solutions of integral equations defined in unbounded domains via spectral theory\footnote{The three authors were partially supported by Ministerio de Econom\'ia y Competitividad, Spain, and FEDER, project MTM2013-43014-P, and by the Agencia Estatal de Investigaci\'on (AEI) of Spain under grant MTM2016-75140-P, co-financed by the European Community fund FEDER.}}

\author{
Alberto Cabada\\
\normalsize e-mail: alberto.cabada@usc.es\\
Luc\'ia L\'opez-Somoza\footnote{Supported by  FPU scholarship, Ministerio de Educaci\'on, Cultura y Deporte, Spain.} \\
\normalsize e-mail: lucia.lopez.somoza@usc.es\\
F. Adri\'an F. Tojo \\
\normalsize e-mail: fernandoadrian.fernandez@usc.es\\
\normalsize \emph{Instituto de Ma\-te\-m\'a\-ti\-cas, Facultade de Matem\'aticas,} \\ \normalsize\emph{Universidade de Santiago de Com\-pos\-te\-la, Spain.}\\ 
}
\date{}

\maketitle


\begin{abstract}
In this work we study integral equations defined on the whole real line. Using a suitable Banach space, we look for solutions which satisfy some certain kind of asymptotic behavior. We will consider spectral theory in order to find fixed points of the integral operator.
\end{abstract}

\noindent {\bf MSC: } 45P05; 34K08; 45J05; 34K25

\noindent {\bf Keywords: }  asymptotic behavior; integral operators; unbounded domain; spectral theory

\section{Introduction}
In this paper we will study the existence of fixed points of the following integral operator 
\begin{equation*}
	T u(t):=\int_{-\infty}^{\infty} k(t,s)\,\eta(s)\,f(s,u(s)) \dif s.
\end{equation*}

When working with integral problems defined in unbounded intervals, the main difficulty is the lack of compactness of the operator. In the recent literature (see \cite{MinCar,MinCar2,MinCar-4ord,FiMinCar,DjeGue}), most of the authors use the following relatively compactness criterion (see \cite{Corduneanu,przerad}) to deal with this problem:

\begin{thm}[{\cite[Theorem 1]{przerad}}] \label{thm_equic}
	Let $E$ be a Banach space and $\cC(\mathbb{R},E)$ the space of all bounded continuous functions $x\colon \bR\rightarrow E$. For a set $D\subset \cC(\bR,E)$ to be relatively compact, it is necessary and sufficient that:
	\begin{enumerate}
		\item $\{x(t), \ x\in D\}$ is relatively compact in $E$ for any $t\in\bR$;
		\item for each $a>0$, the family $D_a:=\{x|_{[-a,a]}, \ x\in D \}$ is equicontinuous;
		\item $D$ is stable at $\pm\infty$, that is, for any $\varepsilon>0$, there exists $T>0$ and $\delta>0$ such that if $\|x(T)-y(T)\|\le \delta$, then $\|x(t)-y(t)\|\le \varepsilon$ for $t\ge T$ and if $\|x(-T)-y(-T)\|\le \delta$, then $\|x(t)-y(t)\|\le \varepsilon$ for $t\le -T$, where $x$ and $y$ are arbitrary functions in $D$.
	\end{enumerate}
\end{thm}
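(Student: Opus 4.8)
The plan is to read this as an Arzel\`a--Ascoli theorem adapted to the non-compact domain $\bR$, working in the complete metric space $\cC(\bR,E)$ with the supremum norm $\|x\|=\sup_{t\in\bR}\|x(t)\|_E$. Since this space is complete, relative compactness of $D$ is equivalent both to $D$ being totally bounded and to every sequence in $D$ admitting a uniformly convergent subsequence with limit in $\cC(\bR,E)$. I would prove the two implications separately, using the sequential characterization for sufficiency and total boundedness for necessity.

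For sufficiency (conditions 1--3 $\Rightarrow$ relative compactness), take an arbitrary sequence $(x_n)\subset D$. On each interval $[-a,a]$ with $a\in\bN$, conditions 1 and 2 are exactly the hypotheses of the classical Arzel\`a--Ascoli theorem for $E$-valued maps, so $(x_n|_{[-a,a]})$ has a subsequence converging uniformly on $[-a,a]$. A diagonal extraction over $a=1,2,\dots$ then produces a single subsequence $(x_{n_k})$ converging uniformly on every compact interval to a continuous limit $x$. The remaining step is to upgrade this to uniform convergence on all of $\bR$: given $\varepsilon>0$, choose $T>0$ and $\delta>0$ from condition 3. Because $(x_{n_k})$ is uniformly Cauchy on $[-T,T]$, for large $k,l$ one has $\|x_{n_k}(\pm T)-x_{n_l}(\pm T)\|\le\delta$, and stability then forces $\|x_{n_k}(t)-x_{n_l}(t)\|\le\varepsilon$ for all $|t|\ge T$; combined with the uniform Cauchy estimate on $[-T,T]$, this shows $(x_{n_k})$ is uniformly Cauchy on $\bR$. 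Completeness of $\cC(\bR,E)$ yields a limit $x\in\cC(\bR,E)$, continuous and bounded as a uniform limit of bounded continuous maps, so $D$ is relatively compact.

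For necessity I would argue from total boundedness. Condition 1 is immediate, since each evaluation map $x\mapsto x(t)$ is $1$-Lipschitz from $\cC(\bR,E)$ into $E$ and continuous images of relatively compact sets are relatively compact. For condition 2, fix $a>0$ and $\varepsilon>0$, cover $D$ by finitely many balls $B(z_i,\varepsilon/3)$, invoke the uniform continuity of each $z_i$ on the compact set $[-a,a]$ to extract a common modulus $\delta$, and conclude equicontinuity of $D_a$ by the usual three-term estimate. Condition 3 is the delicate point, and I expect it to be the main obstacle. The idea is again to use a finite $\varepsilon/4$-net $\{z_1,\dots,z_N\}\subset D$ and reduce the statement for arbitrary $x,y\in D$ to the finitely many difference functions $g_{ij}(t)=\|z_i(t)-z_j(t)\|$: closeness of $x,y$ at $T$ forces closeness of the enclosing net functions at $T$, and one wants to deduce closeness for all $t\ge T$. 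The crux is to select a single $T$, and then a small $\delta$, that works simultaneously for all $N^2$ pairs --- choosing $T$ so that every pair whose difference fails to remain small beyond $T$ has $g_{ij}(T)$ bounded away from $0$, making the hypothesis $\|x(T)-y(T)\|\le\delta$ vacuous for those pairs while the others satisfy the conclusion automatically. Making this simultaneous choice of $T$ rigorous, exploiting that there are only finitely many net functions, is the heart of the argument and the step I would treat most carefully; the symmetric reasoning then handles $-\infty$.
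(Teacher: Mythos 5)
First, a point of reference: the paper does not prove Theorem \ref{thm_equic} at all --- it is quoted verbatim from \cite{przerad} as background (and is not even used later; the paper works instead with its own Ascoli theorem on $\ol\bR$), so there is no in-paper argument to compare yours against. Judged on its own, your sufficiency direction is correct and standard: Arzel\`a--Ascoli on each $[-a,a]$ plus a diagonal extraction gives uniform convergence on compacta, and condition 3 (read with a single $T,\delta$ valid for all pairs in $D$) upgrades the uniform Cauchy property from $[-T,T]$ to all of $\bR$. The necessity of conditions 1 and 2 via evaluation maps and a finite net is also fine.

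The gap is exactly where you predicted it: the necessity of condition 3. It cannot be closed, because that half of the statement is false as written. Your plan needs one $T$ at which every net-pair $g_{ij}$ that fails to stay below $\varepsilon/2$ beyond $T$ is simultaneously bounded away from $0$; but the sets $\{t: g_{ij}(t)>c\}$ for different pairs need not meet in any neighbourhood of $+\infty$. Concretely, take $E=\bR$ and $D=\{z_1,z_2,z_3\}$ with $z_1\equiv 0$, $z_2(t)=\max\{\sin t,0\}$, $z_3(t)=\max\{-\sin t,0\}$. Being finite, $D$ is compact, yet condition 3 fails for $\varepsilon=1/2$: for any $T>0$ either $\sin T\le 0$, in which case $|z_2(T)-z_1(T)|=0\le\delta$ while $\sup_{t\ge T}|z_2(t)-z_1(t)|=1$, or $\sin T>0$, in which case the pair $(z_3,z_1)$ yields the same contradiction; so no $T,\delta$ exist. (Under the alternative reading in which $T,\delta$ may depend on the pair $x,y$, necessity is restored but sufficiency breaks: the tent functions $x_n(t)=\max\{0,1-|t-n|\}$, $n\in\bN$, satisfy all three conditions pairwise yet are $1$-separated, hence not relatively compact.) The practical conclusion is that only the sufficiency half of the quoted criterion is reliable --- which is the only half the works cited in the introduction actually invoke --- and your proof of that half is sound; no strategy will prove the necessity of condition 3 as stated.
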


In a recent paper \cite{Somoza}, the authors presented a novel way of dealing with the problem of the lack of compactness of the integral operator. They defined a new kind of Banach space: the \textit{space of continuously $\boldmath{n}$-differentiable $\boldmath{ \varphi}$-extensions to infinity}. Moreover, this Banach space makes it possible to study the asymptotic behavior of the solutions of the problem. In that work, the authors used fixed point index methods in order to obtain existence and multiplicity results for boundary value problems and Hammerstein-type equations of the kind
\begin{equation}\label{eqthamm}
	L u(t):=p(t)+\int_{-\infty}^{\infty} k(t,s)\,\eta(s)\,f(s,u(s)) \dif s.
\end{equation}
Furthermore, those results included the location of the solutions, since they were found in a cone defined in a general abstract way --cf. \cite{FigToj}. Depending on the region of the cone, the index was to be proven zero or nonzero, thus providing a solution to the problem of study.

In this paper we complement those findings by approaching the problem in a different way. If in the previous work we had fairly restrictive conditions on the nonlinearity $f$, here we relax in a significant way those restrictions by studying the eigenvalues of some related linear operators. This approach has been used successfully previously, as we can see in the works of Infante et al. \cite{gi-pp-ft}, Webb and Lan \cite{jwkleig} or even in the case of linearly bounded nonlinear operators as shown in \cite{nlpert}. We note that, for the sake of simplicity, we do not include in this paper the function $p$ occurring in \eqref{eqthamm}. However, it could be included with minor adaptations, following the hypotheses for $p$ in \cite{Somoza}.

There is of course a price to pay for the advantage regarding the nonlinearity, and is that the conditions on the kernel $k$ occurring in \eqref{eqthamm} are more restrictive. One can check that this is the case of the kernel in the application studied in \cite{Somoza}. There, we have studied the equation that describes the movement of a self-propelled projectile launched vertically from the surface of a planet, that is, 
\begin{equation}\label{ec-rocket}
u''(t)=-\frac{g\,R^2}{(u(t)+R)^2}+h(t,u(t)), \quad t\in[0,\infty); \ u(0)=0, \ u'(0)=v_0,
\end{equation}
where $u$ represents the distance from the surface of the planet, $R$ is the radius of the planet, $g$ the surface gravity constant, $v_0$ the initial velocity and $h(t,y)$ the acceleration generated by the propulsion system of the rocket.

Rewriting previous problem as an integral one, we can see that solutions of \eqref{ec-rocket} coincide with fixed points of operator
\begin{equation*}
Lu(t)=v_0\,t+\int_{0}^{\infty}k(t,s)\,f(s,u(s))\, \dif s,
\end{equation*}
where
\[k(t,s)=\left\{\begin{array}{ll}
t-s, & 0\le s\le t, \\
0, & \text{otherwise},
\end{array} \right.\]
and \[f(t,y)=-\frac{g\,R^2}{(y+R)^2}+h(t,y).\]

We note that the results in the present work could not be applied to this problem (even with the modification to include the term $p$) as, for instance, condition $(C_2)$ does not hold. At the same time, we will show in Section 5 an example which is solved with the method developed in this paper but does not satisfy the hypotheses required in \cite{Somoza}.

Thus, our two methods are not comparable but complementary, making it possible to deal with different kinds of differential and integral problems defined on unbounded intervals, either with more restrictive conditions on the linear part or on the nonlinear one.

This paper is divided in the following way: in Section 2 we summarize some definitions and results of spectral theory. In Section 3 we compile the theory regarding the space of continuously $n$-differentiable $\varphi$-extensions to infinity, which has been developed in \cite{Somoza}. Section 4 includes our results of existence of solutions of integral problems. Finally, Section 5 shows an example to which the results in Section 4 are applied.

\section{Preliminaries}
Let $(N_1,\|\cdot\|_1)$ and $(N_2,\|\cdot\|_2)$ be two normed spaces. Let $\Gamma:N_1\to N_2$ be a bounded linear operator, that is, such that its norm $\|\Gamma\|=\sup_{\|u\|_2=1}{\|\Gamma u\|_1/\|u\|_2}$ is finite.  
We recall that $\lambda$ is an \textit{eigenvalue} of a linear operator between normed spaces $\Gamma:(N_1,\|\cdot\|_1)\to (N_2,\|\cdot\|_2)$ with corresponding eigenfunction $\phi$ if $\phi \neq 0$ and ${\lambda}\,\phi=\Gamma\, \phi$. The reciprocals of nonzero
eigenvalues are called \emph{characteristic values} of $\Gamma$. 
We will denote the \textit{spectral radius} of $\Gamma$ by $r(\Gamma):=\lim_{n\to\infty}\|\Gamma^n\|^{\frac{1}{n}}$ and its \textit{principal characteristic value}  by $\mu(\Gamma):=1/r(\Gamma)$.\par

We recall now some known definitions and results.
%
%


\begin{dfn}
	We say that $K$ is a \emph{total cone} if $\ol{K-K}=X$.
\end{dfn}

\begin{thm}[{Krein-Rutman \cite[Theorem 1.1]{Du}}]
	Let $K$  be a total cone and
	$\cL : X\to X$ a compact linear operator that maps $K$ to $K$
	with positive spectral radius $r(\cL)$. Then $r(\cL)$ is an eigenvalue with an	eigenvector $\phi\in K\backslash\{0\}$.
\end{thm}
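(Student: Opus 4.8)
The plan is to realize the eigenvector as a limit of suitably normalized resolvent images of cone elements, exploiting that the resolvent inherits positivity from $\cL$ on the half-line beyond the spectral radius. Write $r:=r(\cL)>0$ and, for real $\lambda>r$, consider the resolvent $R(\lambda):=(\lambda I-\cL)^{-1}$. Since $\limsup_n\|\cL^n\|^{1/n}=r$, the Neumann series $R(\lambda)=\sum_{n=0}^{\infty}\lambda^{-(n+1)}\cL^n$ converges in operator norm for every $\lambda>r$. Because $\cL(K)\subseteq K$ and $K$ is a convex cone, each partial sum maps $K$ into $K$; as $K$ is closed, so does the limit. Hence $R(\lambda)(K)\subseteq K$ for all $\lambda>r$, and this positivity of the resolvent is the structural fact that drives the whole argument.

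The decisive step --- and the one I expect to be the main obstacle --- is to show that $r$ itself (the \emph{positive real number} $r$, not merely some spectral value of the same modulus) belongs to $\sigma(\cL)$. For a general compact operator the spectral radius need not be a spectral value, so this is exactly the point at which the cone hypothesis must be used. The idea is a Pringsheim-type argument: the vector-valued series $\sum_n\lambda^{-(n+1)}\cL^n$ has radius of convergence (in the variable $1/\lambda$) equal to $1/r$, and because all its coefficients are positive operators, the boundary point $\lambda=r$ cannot be a point of holomorphy of $R$. Concretely, one expands $R$ in a Taylor series about a real base point $\lambda_0>r$, using $R^{(k)}(\lambda_0)=(-1)^k k!\,R(\lambda_0)^{k+1}$ so that the coefficients encountered while decreasing $\lambda$ towards $r$ are again positive operators; a positivity/monotonicity comparison then forces the nearest singularity on the real axis to sit precisely at $\lambda=r$, giving $r\in\sigma(\cL)$. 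Making this rigorous, in particular converting order-positivity of the Taylor coefficients into a genuine singularity, is the technical heart of the proof.

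Granting $r\in\sigma(\cL)$, the Riesz--Schauder theory of compact operators applies: since $r>0$, it is an eigenvalue of finite multiplicity and a pole of the resolvent, say of order $m\ge 1$, so that $(\lambda-r)^m R(\lambda)\to B_m$ as $\lambda\to r^+$ for some bounded operator $B_m\ne 0$. Here the totality of $K$ enters: since $\ol{K-K}=X$ and $B_m\ne 0$, continuity forces $B_m$ to be nonzero on the dense subspace $K-K$, so there is $x=u-v$ with $u,v\in K$ and $B_m u\ne B_m v$; hence $B_m u_0\ne 0$ for some $u_0\in K$. For this $u_0$ one gets $R(\lambda)u_0=(\lambda-r)^{-m}\bigl(B_m u_0+o(1)\bigr)$, so $\|R(\lambda)u_0\|\to\infty$ as $\lambda\to r^+$: a blow-up direction lying inside the cone.

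To finish, I would choose $\lambda_j\to r^+$ and set $x_j:=R(\lambda_j)u_0/\|R(\lambda_j)u_0\|$. Each $x_j$ lies in $K$ (it is a positive multiple of $R(\lambda_j)u_0\in K$) and has unit norm, and by construction $(\lambda_j I-\cL)x_j=u_0/\|R(\lambda_j)u_0\|\to 0$. Compactness of $\cL$ lets me pass to a subsequence along which $\cL x_j$ converges to some $y$; then $\lambda_j x_j=\cL x_j+o(1)\to y$, whence $x_j\to y/r=:\phi$. Since $K$ is closed, $\phi\in K$; since $\|\phi\|=\lim_j\|x_j\|=1$, we have $\phi\ne 0$; and passing to the limit in $\cL x_j-\lambda_j x_j\to 0$ gives $\cL\phi=r\phi$. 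Thus $\phi\in K\setminus\{0\}$ is an eigenvector with eigenvalue $r=r(\cL)$, as required. The only delicate ingredient beyond routine compact-operator theory is the positivity argument for $r\in\sigma(\cL)$ in the second step.
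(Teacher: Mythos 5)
You should first be aware that the paper does not prove this statement at all: it is the classical Krein--Rutman theorem, quoted with a citation to \cite[Theorem 1.1]{Du}, so there is no in-paper argument to compare against. Judged on its own terms, your outline is the standard resolvent-based proof, and most of it is sound: the positivity of $R(\lambda)=(\lambda I-\cL)^{-1}$ for real $\lambda>r$ via the Neumann series, the use of totality of $K$ to produce $u_0\in K$ with $B_m u_0\neq 0$ once $r$ is known to be a pole of the resolvent, the resulting blow-up $\|R(\lambda)u_0\|\to\infty$ as $\lambda\to r^+$, and the final normalization-plus-compactness extraction of the eigenvector are all correct as written.

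The genuine gap is exactly where you place it, and flagging it does not fill it: you never prove that $r=r(\cL)$ belongs to $\sigma(\cL)$. The Pringsheim strategy is the right instinct, but carrying it out for a cone that is merely \emph{total} is essentially the whole content of the theorem, and the natural implementations fail. Pringsheim's theorem concerns scalar power series with nonnegative coefficients, so one must pair the resolvent with functionals $f$ in the dual cone and vectors $x\in K$ to obtain series $\sum_n\lambda^{-(n+1)}f(\cL^n x)$ with nonnegative coefficients; the difficulty is converting conclusions about these scalar series back into operator-norm statements. Totality of $K$ (that is, $\ol{K-K}=X$) provides no bounded decomposition $x=u-v$ with $\|u\|,\|v\|\le C\|x\|$, so one cannot conclude that the quantities $f(\cL^n x)$ with $f,x$ ranging over unit balls of the dual cone and of $K$ grow like $\|\cL^n\|\sim r^n$, nor that a singularity of $R$ on the circle $|\lambda|=r$ forces a blow-up of $\|R(\lambda)\|$ along the positive real axis; the familiar comparison $\|R(\lambda)\|\le\|R(|\lambda|)\|$ is a Banach-lattice (or at least normal-cone) fact and is unavailable here. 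Your sentence asserting that ``a positivity/monotonicity comparison then forces the nearest singularity on the real axis to sit precisely at $\lambda=r$'' is therefore the theorem itself rather than a routine verification, and as it stands the argument is incomplete at its central step.
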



\begin{thm}[{\cite[Theorem 2.7]{jw-tmna}}]\label{lowbound}
	Let $\cL$ be a bounded linear operator in a Banach space $X$ and let $K$ be a cone in $X$ such that $\cL(K)\subset K$. If there exists $\lambda_0>0$ and $v\in K\setminus\{0\}$ such that $Lv\curlyeqsucc_K \lambda_0 \, v$, then $r(\cL)\ge \lambda_0$.
\end{thm}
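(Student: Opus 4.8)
The plan is to iterate the order inequality $\cL v\curlyeqsucc_K\lambda_0 v$ into an estimate on $\cL^n v$ and then to compare its growth against the spectral radius formula $r(\cL)=\lim_{n\to\infty}\|\cL^n\|^{1/n}$ recalled above. The structural properties of the cone I will rely on are that $K$ is closed, is stable under addition and under multiplication by nonnegative scalars, and is proper, i.e.\ $K\cap(-K)=\{0\}$; throughout, $x\curlyeqsucc_K y$ means $x-y\in K$.

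First I would prove by induction that $\cL^n v\curlyeqsucc_K\lambda_0^n v$ for every $n\ge1$. The case $n=1$ is the hypothesis. Assuming $\cL^n v-\lambda_0^n v\in K$, applying $\cL$ and using $\cL(K)\subset K$ gives $\cL^{n+1}v-\lambda_0^n\,\cL v\in K$; multiplying the hypothesis by the positive scalar $\lambda_0^n$ gives $\lambda_0^n\,\cL v-\lambda_0^{n+1}v\in K$; and adding these two memberships (closure under addition) yields $\cL^{n+1}v-\lambda_0^{n+1}v\in K$, closing the induction. Thus we may write $\cL^n v=\lambda_0^n v+k_n$ with $k_n:=\cL^n v-\lambda_0^n v\in K$.

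The conclusion then follows by contradiction. Suppose $r(\cL)<\lambda_0$ and choose $\rho$ with $r(\cL)<\rho<\lambda_0$; by the spectral radius formula $\|\cL^n\|\le\rho^n$ for all large $n$, so
\[
\left\|\frac{\cL^n v}{\lambda_0^n}\right\|\le\left(\frac{\rho}{\lambda_0}\right)^n\|v\|\nti 0 .
\]
Since $\lambda_0^{-n}\cL^n v=v+\lambda_0^{-n}k_n$, this forces $\lambda_0^{-n}k_n\to -v$ in norm; each $\lambda_0^{-n}k_n$ lies in $K$, so by closedness of $K$ the limit $-v$ lies in $K$ as well. Then $v\in K\cap(-K)=\{0\}$ contradicts $v\neq0$, whence $r(\cL)\ge\lambda_0$. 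The induction is routine; the one delicate point, and the place where the cone hypotheses are essential, is precisely this passage to the limit followed by the appeal to properness $K\cap(-K)=\{0\}$, which is what converts the qualitative iterated inequality into the quantitative lower bound on the spectral radius.
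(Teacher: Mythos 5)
Your proof is correct: the induction $\cL^n v-\lambda_0^n v\in K$ is sound, and the passage to the limit using closedness of $K$ together with $K\cap(-K)=\{0\}$ legitimately yields the contradiction, assuming only the standard cone axioms. The paper itself states this result as a quotation from the reference \cite{jw-tmna} and supplies no proof, and your argument is essentially the standard one given there, so there is nothing further to compare.
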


\begin{thm}[{\cite[Theorem 1]{ZabKrasSte}}]\label{upbound}
	Let the positive, completely continuous, linear operator $\cL$ satisfy the inequality
	\[\cL \,v \le\lambda_0 \,v, \]
	where $v$ is a quasi-interior element of the cone $K$. Then $r(\cL)\leq \lambda_0$.
\end{thm}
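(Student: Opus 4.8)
The plan is to dualise the problem: rather than using the eigenvector of $\cL$ furnished by Krein-Rutman, I would extract a positive eigenfunctional of the adjoint $\cL^*$ associated with the spectral radius and pair it against the given inequality. First I would dispose of the trivial case. Testing $\cL v\le\lambda_0 v$ against any $\psi\in K^*\setminus\{0\}$, where $K^*$ is the dual cone and $x\le y$ means $y-x\in K$, gives $\lambda_0\,\psi(v)\ge\psi(\cL v)\ge 0$; since $v$ quasi-interior forces $\psi(v)>0$, we obtain $\lambda_0\ge 0$. Hence if $r(\cL)=0$ there is nothing to prove, and from now on I assume $r(\cL)>0$.

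The key step is to produce $\psi\in K^*\setminus\{0\}$ with $\cL^*\psi=r(\cL)\,\psi$. Since $\cL$ is completely continuous, Schauder's theorem gives that $\cL^*$ is completely continuous on $X^*$, and $\sigma(\cL^*)=\sigma(\cL)$ yields $r(\cL^*)=r(\cL)>0$. Moreover $\cL^*$ maps $K^*$ into itself: if $\psi\in K^*$ and $x\in K$ then $(\cL^*\psi)(x)=\psi(\cL x)\ge 0$, because $\cL x\in K$. Applying the Krein-Rutman theorem in its dual form to $\cL^*$ acting on $K^*$ then delivers a positive eigenfunctional $\psi\in K^*\setminus\{0\}$ at the eigenvalue $r(\cL^*)=r(\cL)$. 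This is the step I expect to be the main obstacle: the Krein-Rutman statement quoted above is stated for the primal operator on a \emph{total} cone, so to invoke it for $\cL^*$ I must check that $K^*$ is a total cone in $X^*$ and that compactness and positivity of the spectral radius transfer to the adjoint. The cleanest way around this is to cite the standard two-sided form of Krein-Rutman, which simultaneously produces an eigenvector of $\cL$ in $K$ and an eigenfunctional of $\cL^*$ in $K^*$ at the common value $r(\cL)=r(\cL^*)$, so that totality of $K^*$ need not be verified by hand.

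With such a $\psi$ in hand the conclusion is immediate. Applying the positive functional $\psi$ to $\cL v\le\lambda_0 v$ preserves the order, so $\psi(\cL v)\le\lambda_0\,\psi(v)$. Rewriting the left-hand side through the adjoint, $\psi(\cL v)=(\cL^*\psi)(v)=r(\cL)\,\psi(v)$, we get $r(\cL)\,\psi(v)\le\lambda_0\,\psi(v)$. Finally, because $v$ is a quasi-interior element and $\psi\in K^*\setminus\{0\}$, we have $\psi(v)>0$, and dividing through yields $r(\cL)\le\lambda_0$, as claimed. I note that the familiar iteration $\cL^n v\le\lambda_0^n v$, obtained by repeatedly applying the positive operator $\cL$ to the hypothesis, is not needed in this argument, since the single pairing with the eigenfunctional already isolates $r(\cL)$ exactly.
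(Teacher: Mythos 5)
The paper does not prove this statement at all: it is imported verbatim from Zabreiko--Krasnoselskii--Stetsenko as a black box, so there is no in-paper argument to compare yours against. On its own terms your proof is correct and is, in essence, the classical one: pair the hypothesis $\cL v\le\lambda_0 v$ with a positive eigenfunctional $\psi$ of $\cL^*$ at the eigenvalue $r(\cL)$, and use that quasi-interiority of $v$ forces $\psi(v)>0$. The handling of the degenerate case $r(\cL)=0$ and the verification that $\cL^*$ preserves $K^*$ are fine, and you are right that the only delicate point is the source of $\psi$: the one-sided Krein--Rutman statement quoted in the paper does not supply it, and applying Krein--Rutman directly to $\cL^*$ on $K^*$ would require totality of $K^*$, which generally fails. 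Citing the two-sided Krein--Rutman theorem is the correct fix, but note that even that version assumes $K$ is \emph{total}, a hypothesis absent from the statement you are proving; you should observe that it comes for free here, since if $\overline{K-K}\ne X$ then Hahn--Banach gives a nonzero functional vanishing on $K$ (hence lying in $K^*\setminus\{0\}$) and annihilating $v$, contradicting quasi-interiority --- and in any case the cones $P$ and $P_A$ to which the paper applies the theorem are total. With that one observation added, the argument is complete.
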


\begin{rem}\label{rem_quasi-int}
If the cone $K$ has non empty interior, then the interior and the quasi-interior of the cone coincide (see \cite{FullBraun}). 
\end{rem}


%
%

\section{The space of continuously $\boldmath{n}$-differentiable $\boldmath{ \varphi}$-extensions to infinity}
In this section, we review the concepts introduced in \cite{Somoza}.

Consider the space $\ol \bR:=[-\infty,+\infty]$ with the compact topology, that is, the topology generated by the basis
\[\{B(a,r)\ :\ a\in\bR,\ r\in\bR^+\}\cup\{[-\infty,a)\ :\ a\in\bR\}\cup\{(a,+\infty]\ :\ a\in\bR\}.\]
With this topology, $\ol \bR$ is homeomorphic to any compact interval of $\bR$ with the relative topology inherited from the usual topology of $\bR$.\par
It is easy to check that $\cC(\ol\bR,\bR)$ is a Banach space with the usual supremum norm. We define, in a similar way,
\[\cC^n(\ol\bR,\bR):=\left\{f:\ol\bR\to\bR\ :\ f|_{\bR}\in\cC^n(\bR,\bR),\ \exists \lim_{t\to\pm\infty}f^{(j)}(t) \in \bR,\ j=0,\dots,n\right\},\]
 for $n\in\bN$. $\cC^n(\ol\bR,\bR)$, $n\in\bN$, is a Banach space with the norm \[\|f\|_{(n)}:=\sup\left\{\left\|f^{(k)}\right\|_\infty\ :\ k=0,\dots,n\right\}.\]
Take now $\varphi\in\cC^n(\bR,\bR^+)$, where $\bR^+=(0,\infty)$, and define the \emph{space of continuously $n$-differentiable $ \varphi$-extensions to infinity}
\[\widetilde\cC^n_\varphi\equiv\widetilde\cC^n_\varphi(\bR,\bR)=\left\{f\in\cC^n(\bR,\bR)\ :\ \exists \til f\in\cC^n(\ol\bR,\bR),\  f=\varphi \left(\til f|_{\bR}\right)\right\}.\]
We define the norm
\[\|f\|_\varphi:=\|\til f\|_{(n)},\ f\in\widetilde\cC_\varphi.\]
$\|\cdot\|_\varphi$ is well defined, since the extension $\til f$ is unique for every $f$; indeed, assume there are $\til f_1$, $\til f_2$ such that $\til f_1\,\varphi=\til f_2\,\varphi=f$ in $\bR$. Since $\bR$ is dense in $\ol \bR$ and $\til f_1$ and $\til f_2$ are continuous, $\til f_1=\til f_2$.\par
On the other hand, for every $\til f\in\cC^n(\ol\bR,\bR)$ there exists a unique $f\in\widetilde\cC_\varphi$ such that $\til f|_{\bR}\,\varphi=f$ (just define $f:=\til f\,\varphi$ in $\bR$). 

This shows that there is an isometric isomorphism
\begin{equation*}\begin{split}
\Phi:\cC^n(\ol\bR,\bR)&\to\widetilde\cC^n_\varphi \\
\tilde{f}&\mapsto \Phi(\tilde{f})=\tilde{f}|_{\bR}\,\varphi,
\end{split}\end{equation*}
of which the inverse isomorphism is 
\begin{equation*}\begin{split}
\Phi^{-1}:\widetilde\cC^n_\varphi &\to \cC^n(\ol\bR,\bR)\\
f&\mapsto \Phi^{-1}(f)=f/\varphi .
\end{split}\end{equation*}

Furthermore, Arcel\`a-Ascoli's Theorem applies to $\cC^n(\ol\bR,\bR)$ since $\ol\bR$ is a Hausdorff compact topological space and $\bR$ is a complete metric space. Using $\Phi$ we can apply the Theorem to $\widetilde\cC^n_\varphi$. To be precise,

\begin{thm}
$F\subset \til \cC_\varphi^n$ has compact closure if and only if the two following conditions are satisfied:
\begin{itemize}
	\item  For each $t\in\bR$, the set $\{\til f(t), \ f\in F\}$ has compact closure or, which is the same (since $\til f(t) \in \bR$), $\{\til f(t), \ f\in F\}$ is bounded, that is, for each $t\in\bR$ there exists some constant $M>0$ such that
	\[\left|\frac{\partial^j \til f}{\partial t^j}(t)\right|=\left|\frac{\partial^j (f/\varphi)}{\partial t^j}(t)\right|\le M<\infty,\]
	for all $j=0,\dots,n$ and $f\in F$.
	\item $F$ is equicontinuous, that is, for all $\varepsilon\in \bR^+$ there exists some $\delta\in\bR^+$ such that 
	\[\left|\frac{\partial^j \til f}{\partial t^j}(r)- \frac{\partial^j \til f}{\partial t^j}(s)\right|=\left|\frac{\partial^j (f/\varphi)}{\partial t^j}(r)- \frac{\partial^j (f/\varphi)}{\partial t^j}(s)\right|<\varepsilon,\]
	for all $j=0,\dots,n$, $f\in F$ and $r,\,s\in\bR$ such that $|r-s|<\delta$.
\end{itemize}
\end{thm}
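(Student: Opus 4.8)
The plan is to transport the classical Arzel\`a--Ascoli theorem from $\cC^n(\ol\bR,\bR)$ to $\widetilde\cC^n_\varphi$ through the isometric isomorphism $\Phi$ constructed above. Since $\Phi$ (and hence $\Phi^{-1}$) is an isometric isomorphism of Banach spaces, it is in particular a homeomorphism, and a homeomorphism sends sets with compact closure to sets with compact closure. Therefore $F\subset\widetilde\cC^n_\varphi$ has compact closure if and only if $\Phi^{-1}(F)=\{f/\varphi\ :\ f\in F\}=\{\til f\ :\ f\in F\}$ has compact closure in $\cC^n(\ol\bR,\bR)$. Thus it suffices to characterize relative compactness in $\cC^n(\ol\bR,\bR)$ in terms of the extensions $\til f$ and their derivatives, and then to read off the resulting conditions through the identifications $\til f=f/\varphi$ and $\partial^j\til f/\partial t^j=\partial^j(f/\varphi)/\partial t^j$.

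Next I would reduce the $\cC^n$ statement to the order-zero case. By the very definition of $\cC^n(\ol\bR,\bR)$, each derivative $\til f^{(j)}$, $j=0,\dots,n$, admits finite limits at $\pm\infty$ and hence belongs to $\cC^0(\ol\bR,\bR)$. I would then consider the map
\[
\iota:\cC^n(\ol\bR,\bR)\to \prod_{j=0}^{n}\cC^0(\ol\bR,\bR),\qquad g\mapsto (g,g',\dots,g^{(n)}).
\]
Endowing the target with the maximum of the $n+1$ sup-norms, $\iota$ is an isometry onto its image by the definition of $\|\cdot\|_{(n)}$, and its image is closed: if $(g_k^{(j)})_k$ converges uniformly for every $j$, the classical theorem on differentiation of uniform limits forces the limits to be consistent, so the limit of $(g_k)_k$ again lies in $\cC^n(\ol\bR,\bR)$ with the expected derivatives. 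Consequently a set $G\subset\cC^n(\ol\bR,\bR)$ has compact closure if and only if $\iota(G)$ has compact closure in the finite product, which (the product being finite) holds if and only if each projection $\{g^{(j)}\ :\ g\in G\}$, $j=0,\dots,n$, has compact closure in $\cC^0(\ol\bR,\bR)$.

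Finally I would invoke the classical Arzel\`a--Ascoli theorem in $\cC^0(\ol\bR,\bR)$ for each family $\{\til f^{(j)}\ :\ f\in F\}$. As $\ol\bR$ is a compact Hausdorff space and $\bR$ is a complete metric space, such a family has compact closure precisely when it is pointwise relatively compact and equicontinuous; since the values lie in $\bR$, pointwise relative compactness is just pointwise boundedness, which is exactly condition (1), while the equicontinuity of the extensions is condition (2). Chaining the three reductions then yields the stated equivalence.

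The hard part will be the passage from $\cC^n$ to $\cC^0$: one has to verify that $\iota$ has closed image, so that relative compactness of the tuple of derivatives genuinely returns an element of $\cC^n(\ol\bR,\bR)$, and this rests entirely on the compatibility of uniform limits with differentiation. A second point that must be handled with care is that the equicontinuity in (2) is that of the \emph{extensions} $\til f^{(j)}$ on the compact space $\ol\bR$; the real content beyond ordinary equicontinuity on bounded windows is the uniform control of the functions $f/\varphi$ and their derivatives near $\pm\infty$, which is exactly the behaviour that is not automatic on an unbounded domain and that the compactified framework $\ol\bR$ is designed to encode.
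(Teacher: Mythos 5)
Your proposal follows the same route the paper takes: the paper offers no written proof beyond the remark that Arzel\`a--Ascoli applies to $\cC^n(\ol\bR,\bR)$ because $\ol\bR$ is compact Hausdorff, and that the isometric isomorphism $\Phi$ transports the conclusion to $\til\cC^n_\varphi$; your two reductions (through $\Phi$, then through the isometric embedding $g\mapsto(g,g',\dots,g^{(n)})$ with closed image) are precisely the details that remark leaves implicit, and they are correct. The one point you should state as a correction rather than a caveat is that the equicontinuity in condition (2) must be taken with respect to the compact topology of $\ol\bR$, i.e.\ it must include uniform smallness of $|\til f^{(j)}(r)-\til f^{(j)}(s)|$ over $F$ for $r,s$ ranging in a neighbourhood of $+\infty$ (resp.\ of $-\infty$): the literal reading ``for $r,s\in\bR$ with $|r-s|<\delta$'' is ordinary uniform equicontinuity on $\bR$ and is \emph{not} sufficient for the ``if'' direction, as translates of a fixed bump (with $\varphi\equiv1$ and $n=0$) satisfy both displayed conditions yet admit no uniformly convergent subsequence in $\|\cdot\|_\varphi$. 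With that reading of (2), your argument is complete and matches the paper's intent.
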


More properties of these spaces can be found in \cite{Somoza}.

\section{Eigenvalue criteria}\label{seceigen}

In this section we will study the existence of fixed points of an operator $T$ on $\widetilde\cC^n_\varphi$ given by equation \eqref{eqthamm}. In particular, we will look for solutions of the previous integral equation in abstract cones, which will be defined following the line of \cite{FigToj}. In that work, the authors considered a real normed space $(N,\|\cdot\|)$ and a continuous functional $\alpha\colon N\rightarrow \bR$. They proved that, when $\alpha$ satisfies the three following properties:
\begin{enumerate}
	\item[$(P_1)$] $\alpha(u+v)\ge \alpha(u)+ \alpha(v), \text{ for all } u,\,v \in N;$
	\item[$(P_2)$] $\alpha(\lambda\,u)\ge \lambda\,\alpha(u)$, for all $u\in N$, $\lambda\ge 0$;
	\item[$(P_3)$] $\left[\alpha(u)\ge 0, \ \alpha(-u)\ge 0\right] \Rightarrow u\equiv 0$; 
\end{enumerate}
then 
\[K_\alpha=\left\{u\in N\ : \ \alpha(u)\ge 0\right\}\]
is a cone.

This way, we will consider the abstract cone 
\begin{displaymath}K_\alpha=\left\{u\in\widetilde{\cC}^n_\varphi\ : \ \alpha(u)\ge 0\right\},\end{displaymath}
where $\alpha\colon \widetilde{\cC}^n_\varphi \rightarrow \bR$ is a functional satisfying $(P_1)-(P_3)$.

\begin{rem}
	If the cone $K$ is defined by a continuous functional $\alpha$ (as it will occur with the cones considered in this paper), then $v$ an element of the cone will belong to its interior if and only if $\alpha(v)>0$.
\end{rem}

In order to state our eigenvalue comparison results, we consider the following operator on $\widetilde\cC^n_\varphi$:
\[
L_1\,u(t):=  \int_{-\infty}^{\infty}| k(t,s) \, \eta(s)|\,u(s)\,\dif s.
\]

Consider $P$, the cone of nonnegative functions in  $\widetilde\cC^n_\varphi$, that is
\[P:=\left\{u\in\widetilde\cC^n_\varphi \ : \ u\ge 0 \text{ on } \bR \right\}.\]

In this section we will assume the following hypotheses:
\begin{itemize}
\item [ $(C_{1})$] The kernel $k:\bR^2\to\bR$, is such that $k(\cdot,s)\,\eta(s)\in\widetilde\cC_{\varphi}^n$ for every $s\in\bR$. Moreover, 
\begin{itemize}
	\item if $n=0$, then for every $\varepsilon >0$, there exist $\delta>0$ and a measurable function $\omega_0$ such that if $|t_1-t_2|<\delta$ then
	\begin{itemize}
		\item[(i)] \[\left|\,\frac{|k(t_1,s)\,\eta(s)|}{\varphi(t_1)}-\frac{|k(t_2,s)\,\eta(s)|}{\varphi(t_2)}\,\right| < \varepsilon\,\omega_0(s),\]
		\item[(ii)] \[\left|\,\frac{(k(t_1,s)\,\eta(s))^+}{\varphi(t_1)}-\frac{(k(t_2,s)\,\eta(s))^+}{\varphi(t_2)}\,\right| < \varepsilon\,\omega_0(s),\]
	\end{itemize} 
and
\begin{itemize}
	\item[(iii)] 		\[\left|\,\frac{k(t_1,s)\,\eta(s)}{\varphi(t_1)}-\frac{k(t_2,s)\,\eta(s)}{\varphi(t_2)}\,\right| < \varepsilon\,\omega_0(s)\]
\end{itemize}
	for a.\,e. $s \in \mathbb{R}$. Here, as usual, $\left(k(t,s) \, \eta(s)\right)^+=\max\left\{k(t,s) \, \eta(s),\, 0 \right\}$;
	\item if $n>0$, $k(t,s)\,\eta(s)\ge 0$ and for every $\varepsilon >0$ and $j=0,\dots,n$, there exist $\delta>0$ and a measurable function $\omega_j$ such that if $|t_1-t_2|<\delta$ then \[\left|\frac{\partial ^j (k(\cdot,s)\,\eta(s)/\varphi(\cdot))}{\partial t^j}(t_1)-\frac{\partial ^j (k(\cdot,s)\,\eta(s)/\varphi(\cdot))}{\partial t^j}(t_2)\right| < \varepsilon\,\omega_j(s)\] for a.\,e. $s \in \mathbb{R}$.
\end{itemize}

\item [$(C_{2})$] It holds that $\omega_j\,\varphi,\,\frac{\partial ^j k}{\partial t^j}(t,\cdot)\,\eta(\cdot)\,\varphi(\cdot)\in\Lsp{1}(\bR)$   for every $t\in\bR$, $j=0,\dots,n$; and
\[\frac{\partial ^{j-l}}{\partial t^{j-l}}\frac{1}{\varphi}(t)\int_{-\infty}^{+\infty} \left|\frac{\partial ^l k}{\partial t^l}(t,s)\,\eta(s)\right| \, \varphi(s)\,\dif s\in \Lsp{\infty}(\bR),\]
for all $j=0,\dots,n$; $l=0,\dots,j$.

Moreover, defining
\[z_{(\pm)}(s):=\lim\limits_{t\rightarrow \pm \infty} \frac{|k(t,s)\,\eta(s)|}{\varphi(t)} \]
and
\[M(s):=\sup_{t\in\bR} \frac{|k(t,s)\,\eta(s)|}{\varphi(t)},\]
it is satisfied that $z_{(\pm)}\,\varphi, \, M\,\varphi \in L^1(\bR)$.
		
\item  [ $(C_{3})$] $f\colon \bR \times \bR \rightarrow [0,+\infty)$  satisfies a sort of $\Lsp{\infty}$-Carath\'{e}odory 
conditions, that is, $f(\cdot,y)$ is measurable for each fixed
$y\in\bR$ and $f(t,\cdot)$ is continuous for a.\,e. $t\in \bR$, and, for each $r>0$, there exists $\phi_{r} \in \Lsp{\infty}(\bR)$ such that \[\frac{f(t,x\varphi(t))}{\varphi(t)}\le\phi_{r}(t),\] for all $x\in [-r,r]$ and a.\,e. $t\in \bR$.
\item  [ $(C_4)$] $\alpha(|k(\cdot,s)\,\eta(s)|)\ge 0$ for a.\,e. $s\in\bR$.
\item  [ $(C_5)$] $\alpha(|k(\cdot,s)\,\eta(s)|)\,\varphi(s)\in \Lsp{1}(\bR)$ and \begin{displaymath}\alpha(L_1 u)\ge \int_{-\infty}^{\infty} \alpha(|k(\cdot,s)\,\eta(s)|)\,u(s)\, \dif s \ \text{for all } u\in P.\end{displaymath}
\item[$(C_6)$] There exists $A\subset \bR$ such that $A$ is a finite union of compact intervals and
$k(t,s)\, \eta(s) \ge 0, \ k(\cdot,s)\, \eta(s) \not\equiv 0$  for every $t\in A$ and a.\,e. $ s\in \bR$. Moreover, it holds that
\begin{equation*} 
	\frac{1}{\widetilde M(A)}=\frac{1}{\widetilde M} :=\inf_{t\in  A }\int_A k(t,s)\eta(s)\,\dif s>0.
\end{equation*}
\end{itemize}

We will also define the following auxiliary operator on $\widetilde\cC^n_\varphi$.
\[
L_2 \,u(t):=  \int_A \left(k(t,s) \, \eta(s)\right)^+\,u(s)\,\dif s.
\]

With regard to operator $L_2$, we will consider the following assumptions:
\begin{itemize}
\item  [ $(C_7)$] $\alpha((k(\cdot,s)\eta(s))^+)\ge 0$ for a. e. $s\in\bR$.
\item  [ $(C_8)$] $\alpha\left((k(\cdot,s)\,\eta(s))^+\right)\,\varphi(s)\in \Lsp{1}(A)$ and
\begin{displaymath}\alpha\left(L_2  u\right)\ge \int_A  \alpha\left((k(\cdot,s)\,\eta(s))^+\right)\,u(s)\, \dif s \ \text{for all } u\in P.\end{displaymath}
\end{itemize}

Finally, to ensure that operator $T$ maps the cone $K_\alpha$ into itself, we need to ask for the following conditions:
\begin{itemize}
	\item  [ $(C_9)$] $\alpha(k(\cdot,s)\eta(s))\ge 0$ for a. e. $s\in\bR$.
	\item  [ $(C_{10})$] $\alpha\left(k(\cdot,s)\,\eta(s)\right)\,\varphi(s)\in \Lsp{1}(\bR)$ for a.\,e. $s\in\bR$ and
	\begin{displaymath}\alpha\left(T  u\right)\ge \int_{-\infty}^{\infty} \alpha\left(k(\cdot,s)\,\eta(s)\right)\,f(s,u(s))\, \dif s \ \text{for all } u\in K_\alpha.\end{displaymath}
\end{itemize}

\begin{thm}\label{lcomp} 
	If $(C_1)$, $(C_2)$, $(C_4)$ and $(C_5)$ hold, then operator $L_1$ is continuous, compact and maps $P$ into $P\cap K_\alpha$.
\end{thm}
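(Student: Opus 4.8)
The plan is to exploit the isometric isomorphism $\Phi\colon\cC^n(\ol\bR,\bR)\to\widetilde\cC^n_\varphi$ in order to reduce every assertion about $L_1$ to a statement about $\Phi^{-1}(L_1u)=L_1u/\varphi$ inside the classical space $\cC^n(\ol\bR,\bR)$. Since $L_1$ is manifestly linear, continuity is equivalent to boundedness, so there are really four things to check: that $L_1u/\varphi$ extends to an element of $\cC^n(\ol\bR,\bR)$ (well-definedness), that $\|L_1u\|_\varphi\le C\|u\|_\varphi$ (boundedness, hence continuity), that $L_1$ sends bounded sets to relatively compact ones (compactness), and that $L_1(P)\subset P\cap K_\alpha$. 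Throughout I will use that $u\in\widetilde\cC^n_\varphi$ forces the pointwise bound $|u(s)|\le\|u\|_\varphi\,\varphi(s)$, which turns every hypothesis phrased with the weight $\varphi$ into a genuine domination. When $n>0$ I also use the sign hypothesis in $(C_1)$, namely $k(t,s)\eta(s)\ge0$, so that the absolute value disappears and $\tilde k(\cdot,s):=k(\cdot,s)\eta(s)/\varphi(\cdot)\in\cC^n(\ol\bR,\bR)$.

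Well-definedness is the core of the argument and proceeds in three steps. First, for fixed $t$ the integral converges because $\int_{-\infty}^{\infty}|k(t,s)\eta(s)|\,\varphi(s)\,\dif s<\infty$ by the integrability part of $(C_2)$. Second, I differentiate under the integral sign up to order $n$: the equicontinuity estimate $(C_1)$ bounds the increments of $s\mapsto\partial_t^j\tilde k(t,s)$ by a telescoping multiple of $\omega_j(s)$, which is integrable against $\varphi$ by $(C_2)$, so Leibniz's rule applies and the $j$-th derivative of $L_1u/\varphi$ equals $v_j(t):=\int_{-\infty}^{\infty}\partial_t^j\tilde k(t,s)\,u(s)\,\dif s$. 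Expanding $\partial_t^j\tilde k$ by the product rule and invoking the $L^\infty$ part of $(C_2)$ shows each $v_j$ is bounded with $\|v_j\|_\infty\le C_j\|u\|_\varphi$, while the same equicontinuity estimate gives $|v_j(t_1)-v_j(t_2)|\le\varepsilon\|u\|_\varphi\|\omega_j\,\varphi\|_{L^1}$ whenever $|t_1-t_2|<\delta$, so each $v_j$ is uniformly continuous.

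The delicate point is the existence of the limits of $v_j$ at $\pm\infty$, which is precisely what places $L_1u/\varphi$ in $\cC^n(\ol\bR,\bR)$. For $j=0$ this is a direct dominated convergence: $|\tilde k(t,s)|\le M(s)$ with $M\varphi\in L^1$, and $\tilde k(t,s)\to z_{(\pm)}(s)$ pointwise with $z_{(\pm)}\varphi\in L^1$, so $v_0(t)\to\int_{-\infty}^{\infty}z_{(\pm)}u$. For $1\le j\le n$ I propose to avoid imposing further domination hypotheses by bootstrapping: since $v_{j-1}$ already has finite limits at $\pm\infty$ and its derivative $v_{j-1}'=v_j$ is uniformly continuous, Barbalat's lemma forces $v_j(t)\to0$, and induction on $j$ then yields the limits at all orders. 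This is the step I expect to be the main obstacle, since it is where the specific integrability built into $(C_2)$ (the functions $M$, $z_{(\pm)}$ and the mixed $L^\infty$ quantities) is indispensable and where differentiation under the integral must be carefully justified.

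The remaining claims are then routine. Boundedness follows from $\|L_1u\|_\varphi=\|L_1u/\varphi\|_{(n)}=\sup_{0\le j\le n}\|v_j\|_\infty\le C\|u\|_\varphi$, giving continuity by linearity. For compactness, let $B$ be bounded in $\widetilde\cC^n_\varphi$; the bounds $\|v_j\|_\infty\le C_j\|u\|_\varphi$ supply condition (i) and the uniform-continuity estimate supplies the equicontinuity (ii) of the Arzel\`a--Ascoli criterion for $\widetilde\cC^n_\varphi$ recorded in Section 3, so $L_1(B)$ is relatively compact. Finally, if $u\in P$ then the integrand $|k(t,s)\eta(s)|\,u(s)$ is nonnegative, whence $L_1u\ge0$ and $L_1u\in P$; and by $(C_5)$ together with $(C_4)$ and $u\ge0$ we obtain $\alpha(L_1u)\ge\int_{-\infty}^{\infty}\alpha(|k(\cdot,s)\eta(s)|)\,u(s)\,\dif s\ge0$, so $L_1u\in K_\alpha$ and therefore $L_1(P)\subset P\cap K_\alpha$.
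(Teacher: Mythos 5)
Your proposal is correct and, for the most part, follows the same route as the paper: reduce everything to $L_1u/\varphi$ via the isomorphism with $\cC^n(\ol\bR,\bR)$, use the $(C_1)$ increments dominated by $\omega_j$ together with $\omega_j\varphi\in\Lsp{1}(\bR)$ to get (equi)continuity of each $\partial^j(\widetilde{L_1u})/\partial t^j$, differentiate under the integral via Leibniz's rule, bound $\|v_j\|_\infty$ through the general Leibniz product expansion and the $\Lsp{\infty}$ part of $(C_2)$, obtain the limit at $\pm\infty$ of $v_0$ by dominated convergence using $M\varphi, z_{(\pm)}\varphi\in\Lsp{1}(\bR)$, deduce compactness from the Arzel\`a--Ascoli criterion of Section 3, and get $L_1(P)\subset P\cap K_\alpha$ directly from positivity of the kernel plus $(C_4)$ and $(C_5)$. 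The one place where you genuinely diverge is the existence of the limits at $\pm\infty$ of the higher-order derivatives $v_j$, $1\le j\le n$, which is needed for $L_1u/\varphi$ to lie in $\cC^n(\ol\bR,\bR)$. The paper dispatches this with the single phrase ``analogously to Case I,'' which strictly speaking only covers $j=0$; your bootstrapping via Barbalat's lemma (finite limit of $v_{j-1}$ at $\pm\infty$ plus uniform continuity of $v_{j-1}'=v_j$ forces $v_j\to0$, then induct on $j$) is a clean and self-contained way to supply exactly the missing step without introducing any new domination hypotheses beyond $(C_1)$--$(C_2)$. The only point to be careful about is that the uniform continuity of $v_j$ you feed into Barbalat must hold on a neighbourhood of $+\infty$ (respectively $-\infty$), which your $(C_1)$-based estimate does give since it is uniform in $t_1,t_2\in\bR$; with that noted, your argument is complete and arguably tighter than the paper's at this point.
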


\begin{proof}
We will distinguish two different cases:

\textbf{CASE I: $n=0$:}

\emph{$L_1$ maps $(\widetilde\cC_\varphi,\|\cdot\|_\varphi)$ to $(\widetilde\cC_\varphi,\|\cdot\|_\varphi)$:} Let $u\in \widetilde\cC_\varphi$. From $(C_1)$, (i), given $\varepsilon\in\bR^+$, there exists some $\delta\in\bR^+$ such that for $t_1,\,t_2 \in \mathbb{R}$, $|t_1-t_2|<\delta$ it is satisfied that
\begin{equation}\label{cont_L_1 u_CASE1}\begin{split}
\left| \widetilde{L_1 u}(t_1)- \widetilde{L_1 u}(t_2)\right|  \le & \int_{-\infty}^{+\infty} \left| \frac{|k(t_1,s)\,\eta(s)|}{\varphi(t_1)}- \frac{|k(t_2,s)\,\eta(s)|}{\varphi(t_2)} \right| \, |u(s)| \, \dif s  \\ \le & \, \varepsilon \int_{-\infty}^{\infty} \omega_0(s)\,|u(s)| \, \dif s \le \varepsilon \int_{-\infty}^{\infty} \omega_0(s)\,\frac{|u(s)|}{\varphi(s)}\,\varphi(s) \, \dif s \\ \le & \, \varepsilon \, \|u\|_\varphi \int_{-\infty}^{\infty} \omega_0(s)\,\varphi(s)\, \dif s
\end{split}\end{equation}
and since, by $(C_2)$, $\omega_0\,\varphi\in \Lsp{1}(\bR)$, the previous expression is bounded from above by $\varepsilon\, \|u\|_\varphi\,c$ for some positive constant $c$. Hence, $ \widetilde{L_1 u}$ is continuous in $\bR$. Now we will prove that there exists
\[\lim\limits_{t\to \pm\infty}\widetilde{L_1 u}(t)=\lim\limits_{t\to \pm\infty}\frac{L_1 u(t)}{\varphi(t)}=\lim\limits_{t\to \pm\infty}\frac{1}{\varphi(t)} \int_{-\infty}^{\infty} |k(t,s)\,\eta(s)|\,u(s)\, \dif s \in \bR.\] 
Since $k(\cdot,s)\,\eta(s)\in\til{\cC}_\varphi$, then, for all $s\in\bR$, there exists
\[\lim\limits_{t\to \pm\infty}\frac{|k(t,s)\,\eta(s)|}{\varphi(t)}=:z_{(\pm)}(s)\in \bR. \]
On the other hand, for a.\,e. $s\in\bR$:
\[\left|\frac{|k(t,s)\,\eta(s)|}{\varphi(t)} \,u(s)\right|\le   M(s) \,|u(s)|= M(s) \, \frac{|u(s)|}{\varphi(s)} \varphi(s) \le \|u\|_\varphi\, M(s) \, \varphi(s)  \ \text{ for all } t\in\bR\]
and, from $(C_2)$, $M \, \varphi \in \Lsp{1}(\bR)$. Thus, from Lebesgue's Dominated Convergence Theorem,
\[\lim\limits_{t\to \pm\infty}\frac{1}{\varphi(t)} \int_{-\infty}^{\infty} |k(t,s)\,\eta(s)|\,u(s)\, \dif s= \int_{-\infty}^{\infty} \lim\limits_{t\to \pm\infty} \frac{|k(t,s)\,\eta(s)|}{\varphi(t)} \,u(s)\, \dif s= \int_{-\infty}^{\infty} z_{(\pm)}(s) \,u(s)\, \dif s, \]
and, since, 
\[\left|\int_{-\infty}^{\infty} z_{(\pm)}(s) \,u(s)\, \dif s\right| \le \int_{-\infty}^{\infty} z_{(\pm)}(s) \,|u(s)|\, \dif s \le \|u\|_\varphi \int_{-\infty}^{\infty} z_{(\pm)}(s) \,\varphi(s)\, \dif s\in\bR,\]
we deduce that $z_{(\pm)}\,u\in\Lsp{1}(\bR)$. Therefore there exists $\lim\limits_{t\to \pm\infty}\frac{L_1u(t)}{\varphi(t)}$. Consequently, $L_1u\in \widetilde\cC_\varphi$.

It is left to see that $L_1 u$ is bounded in $\|\cdot\|_\varphi$. Using $(C_2)$, we have that
\begin{equation}\label{bound_L_1 u_CASE1}\begin{split}
\|L_1 u\|_\varphi=& \left\|\widetilde{L_1 u}\right\|_\infty= \left\|  \frac{L_1 u}{\varphi}\right\|_\infty 
=  \left\|\frac{1}{\varphi(t)} \int_{-\infty}^{+\infty} |k(t,s)\,\eta(s)|\,u(s)\,\dif s \right\|_\infty \\ \le & \,\|u\|_\varphi\,\left\|\frac{1}{\varphi(t)}\int_{-\infty}^{+\infty} \left|k(t,s)\,\eta(s)\right|\, \varphi(s)\, \dif s\right\|_\infty < +\infty.
\end{split}\end{equation}

\emph{Continuity:} It is obvious from the linearity and boundedness of operator $L_1$.

\emph{Compactness:} Let $B \subset \widetilde{\mathcal{C}}_{\varphi}$ a bounded set, that is, $\|u\|_{\varphi} \le R$ for all $u \in B$ and some $R>0$. 
Then, in the upper bound of $\left\|\widetilde{L_1 u}\right\|_\infty$ found in expression \eqref{bound_L_1 u_CASE1} we can substitute $\|u\|_\varphi$ by $R$ and to obtain an upper bound which does not depend on $u$. Therefore it is clear that the set $L_1(B)$ is totally bounded.

On the other hand, taking into account the upper bound found in \eqref{cont_L_1 u_CASE1}, we have that if $t_1,\, t_2\in\bR$ are such that $|t_1-t_2|<\delta$ then
\begin{equation*}
\left| \widetilde{L_1 u}(t_1)- \widetilde{L_1 u}(t_2)\right| \le  \varepsilon \, \|u\|_\varphi \int_{-\infty}^{\infty} \omega_0(s)\,\varphi(s) \dif s \le \varepsilon \,R \int_{-\infty}^{\infty} \omega_0(s)\,\varphi(s) \dif s,
\end{equation*} 
and, since $\omega_0\,\varphi\in \Lsp{1}(\bR)$, we conclude that $L_1(B)$ is equicontinuous. 

In conclusion, we derive, by application of Ascoli-Arzela's Theorem, that $L_1(B)$ is relatively compact in $\widetilde{\mathcal{C}}_{\varphi}$ and therefore $L_1$ is a compact operator.

\emph{$L_1$ maps $P$ to $P\cap K_\alpha$:} Since $L_1$ has a positive integral kernel, it clearly maps $P$ into $P$. Finally, it maps $P$ into $P\cap K_\alpha$ as a direct consequence of hypothesis $(C_4)$ and $(C_5)$.

\textbf{CASE II: $n\neq 0$:}

\emph{$L_1$ maps $(\widetilde\cC^n_\varphi,\|\cdot\|_\varphi)$ to $(\widetilde\cC^n_\varphi,\|\cdot\|_\varphi)$:} Let $u\in \widetilde\cC^n_\varphi$. Since $\frac{k(\cdot,s)\,\eta(s)}{\varphi(\cdot)}$ is integrable for every $s\in\bR$, we can use Leibniz's Integral Rule for generalised functions (see \cite[p. 484]{Jones}) to get
\[\frac{\partial ^j \til{L_1 u}}{\partial t^j}(t)=\frac{\partial ^j (L_1 u/\varphi)}{\partial t^j}(t)=\int_{-\infty}^{+\infty} \frac{\partial ^j(k(\cdot,s)\,\eta(s)/\varphi(\cdot))}{\partial t^j}(t)\,u(s)\dif s.\]

On the other hand, from $(C_1)$, given $\varepsilon\in\bR^+$, there exists some $\delta\in\bR^+$ such that for $t_1,\,t_2 \in \mathbb{R}$, $|t_1-t_2|<\delta$ it is satisfied that
\begin{equation}\label{cont_L_1 u}\begin{split}
\left|\frac{\partial ^j \widetilde{L_1 u}}{\partial t^j}(t_1)-\frac{\partial ^j \widetilde{L_1 u}}{\partial t^j}(t_2)\right|  \le & \int_{-\infty}^{+\infty} \left| \frac{\partial^j (k(\cdot,s)\,\eta(s)/\varphi(\cdot))}{\partial t^j}(t_1)- \frac{\partial^j (k(\cdot,s)\,\eta(s)/\varphi(\cdot))} {\partial t^j}(t_2) \right| \, |u(s)| \, \dif s  \\ \le & \, \varepsilon \int_{-\infty}^{\infty} \omega_j(s)\,|u(s)| \, \dif s \le \varepsilon \, \|u\|_\varphi \int_{-\infty}^{\infty} \omega_j(s)\,\varphi(s)\, \dif s.
\end{split}\end{equation}
and, since $\omega_j\,\varphi\in \Lsp{1}(\bR)$, the previous expression is bounded from above by $\varepsilon\, \|u\|_\varphi\,c$ for some positive constant $c$. Hence, $\frac{\partial ^j \widetilde{L_1 u}}{\partial t^j}$ is continuous in $\bR$ for $j=0,\dots,n$, that is, $\widetilde{L_1 u}\in\cC^n(\bR,\bR)$. Analogously to Case I, it can be proved that there exists $\lim\limits_{t\to \pm\infty}\widetilde{L_1 u}(t)$ and, consequently, $L_1 u\in \widetilde\cC^n_\varphi$. 

It is left to see that $L_1 u$ is bounded in $\|\cdot\|_\varphi$. Using the General Leibniz's Rule (for differentiation), it is clear that
\[\frac{\partial ^j \widetilde{L_1 u}}{\partial t^j}=\frac{\partial ^j (L_1 u/\varphi)}{\partial t^j}=\sum_{l=0}^j {j \choose l}\frac{\partial ^l L_1 u}{\partial t^l}\frac{\partial ^{j-l}}{\partial t^{j-l}}\frac{1}{\varphi}.\]
Moreover, from Leibniz's Integral Rule for generalised functions again,
\[\frac{\partial ^l L_1 u}{\partial t^l}(t)=\int_{-\infty}^{\infty} \frac{\partial ^l k}{\partial t^l}(t,s)\,\eta(s)\,u(s)\dif s.\]
Thus,
\begin{equation*}\begin{split}
\left\|\frac{\partial ^j \widetilde{L_1 u}}{\partial t^j}\right\|_\infty=&\left\|\sum_{l=0}^j {j \choose l}\frac{\partial ^l L_1 u}{\partial t^l}\frac{\partial ^{j-l}}{\partial t^{j-l}}\frac{1}{\varphi}\right\|_\infty\le\sum_{l=0}^j {j \choose l}\left\|\frac{\partial ^l L_1 u}{\partial t^l}\frac{\partial ^{j-l}}{\partial t^{j-l}}\frac{1}{\varphi}\right\|_\infty\\
= & \sum_{l=0}^j {j \choose l} \left\|\frac{\partial ^{j-l}}{\partial t^{j-l}}\frac{1}{\varphi}(t) \int_{-\infty}^{+\infty} \frac{\partial ^l k}{\partial t^l}(t,s)\,\eta(s)\,u(s)\,\dif s \right\|_\infty.
\end{split}\end{equation*}
It is satisfied that
\begin{align*}
\left|\frac{\partial ^{j-l}}{\partial t^{j-l}}\frac{1}{\varphi}(t)\int_{-\infty}^{+\infty} \frac{\partial ^l k}{\partial t^l}(t,s)\,\eta(s)\,u(s)\,\dif s\right| \le & \, \left|\frac{\partial ^{j-l}}{\partial t^{j-l}}\frac{1}{\varphi}(t)\right|\int_{-\infty}^{+\infty} \left|\frac{\partial ^l k}{\partial t^l}(t,s)\,\eta(s)\right|\,|u(s)|\,\dif s \\
\le & \,\|u\|_\varphi\,\left|\frac{\partial ^{j-l}}{\partial t^{j-l}}\frac{1}{\varphi}(t)\right|\int_{-\infty}^{+\infty} \left|\frac{\partial ^l k}{\partial t^l}(t,s)\,\eta(s)\right|\, \varphi(s)\, \dif s,
\end{align*}
and so, from two previous inequalities and taking into account condition $(C_2)$, we deduce that
\begin{equation}\label{bound_L_1 u}
\left\|\frac{\partial ^j \widetilde{L_1 u}}{\partial t^j}\right\|_\infty \le \|u\|_\varphi \sum_{l=0}^j {j \choose l} \left\|\frac{\partial ^{j-l}}{\partial t^{j-l}}\frac{1}{\varphi}(t)\int_{-\infty}^{+\infty} \left|\frac{\partial ^l k}{\partial t^l}(t,s)\,\eta(s)\right| \, \varphi(s)\,\dif s \right\|_\infty < +\infty.
\end{equation}

Therefore, $\|L_1 u\|_\varphi<+\infty$. \par 
\emph{Continuity:} Again, it is obvious from the linearity and boundedness of operator $L_1$.

\emph{Compactness:} The proof is analogous to Case I but using equations \eqref{bound_L_1 u} and \eqref{cont_L_1 u} instead of \eqref{bound_L_1 u_CASE1} and \eqref{cont_L_1 u_CASE1}.

\emph{$L_1$ maps $P$ to $P\cap K_\alpha$:} The proof is the same than in Case I.  
\end{proof}

\begin{thm}
	If $(C_1)$, $(C_2)$, $(C_6)$, $(C_7)$ and $(C_8)$ hold, then operator $L_2$ is continuous, compact and maps $P$ into $P\cap K_\alpha$.
\end{thm}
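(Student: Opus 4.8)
The plan is to imitate, essentially line by line, the proof of Theorem~\ref{lcomp} for the operator $L_1$, splitting again into the cases $n=0$ and $n\neq 0$ and replacing throughout the absolute value $|k(t,s)\,\eta(s)|$ by the positive part $(k(t,s)\,\eta(s))^+$ and the line $\bR$ by the set $A$. First I would check that $L_2 u\in\widetilde\cC^n_\varphi$ whenever $u\in\widetilde\cC^n_\varphi$. For $n=0$ the equicontinuity of $\widetilde{L_2 u}$ is established exactly as in \eqref{cont_L_1 u_CASE1}, now appealing to part (ii) of $(C_1)$; since $A$ is a finite union of compact intervals and $\omega_0\,\varphi\in\Lsp{1}(\bR)$ by $(C_2)$, the bound $\varepsilon\,\|u\|_\varphi\,c$ is recovered. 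For $n\neq 0$ the sign condition built into the $n>0$ branch of $(C_1)$, namely $k\,\eta\ge 0$, gives $(k\,\eta)^+=k\,\eta$, so that the differentiation under the integral sign and the derivative estimates of \eqref{cont_L_1 u} and \eqref{bound_L_1 u} transfer directly, with the integral now taken over $A$.

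Next I would show that the one-sided limits $\lim_{t\to\pm\infty}\widetilde{L_2 u}(t)$ exist, arguing as in Case~I of Theorem~\ref{lcomp}. For each fixed $s$ the quotient $(k(t,s)\,\eta(s))^+/\varphi(t)$ converges as $t\to\pm\infty$, because $k(\cdot,s)\,\eta(s)\in\widetilde\cC_\varphi$ and $x\mapsto x^+$ is continuous; and since $(k\,\eta)^+\le|k\,\eta|$ the integrand is dominated by $\|u\|_\varphi\,M(s)\,\varphi(s)$, which belongs to $\Lsp{1}(\bR)$ by $(C_2)$. Lebesgue's Dominated Convergence Theorem then produces the limit, whence $L_2 u\in\widetilde\cC^n_\varphi$. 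Boundedness of $L_2$ in $\|\cdot\|_\varphi$ is inherited from \eqref{bound_L_1 u_CASE1} (respectively \eqref{bound_L_1 u}), because restricting the integration to $A$ can only decrease the relevant integrals; continuity is then immediate from linearity together with this bound.

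Compactness follows verbatim as in Theorem~\ref{lcomp}: on a bounded set $B\subset\widetilde\cC^n_\varphi$ the $\|\cdot\|_\varphi$-bound is uniform in $u$, so $L_2(B)$ is totally bounded, and the equicontinuity estimate is uniform as well, so $L_2(B)$ is equicontinuous; Ascoli--Arzel\`a then yields relative compactness. Finally, since the kernel $(k(t,s)\,\eta(s))^+$ is nonnegative, $L_2$ maps $P$ into $P$, and the inclusion into $P\cap K_\alpha$ follows from $(C_7)$ and $(C_8)$ exactly as $(C_4)$ and $(C_5)$ were used for $L_1$: for $u\in P$ one has $\alpha(L_2 u)\ge\int_A\alpha((k(\cdot,s)\,\eta(s))^+)\,u(s)\,\dif s\ge 0$.

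The step requiring the most care is the well-definedness when $n\neq 0$, since the positive part of a $\cC^n$ function is in general not $\cC^n$, and so the $L_1$ argument cannot simply be copied. The resolution is precisely the observation made above, that the nonnegativity hypothesis in the $n>0$ branch of $(C_1)$ collapses $(k\,\eta)^+$ onto $k\,\eta$, restoring the smoothness needed to differentiate under the integral; for $n=0$ this difficulty is absent, because both continuity and the existence of limits at $\pm\infty$ survive the operation $x\mapsto x^+$.
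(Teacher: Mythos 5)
Your proposal is correct and follows essentially the same route as the paper: establish $L_2 u\in\widetilde\cC^n_\varphi$ via $(C_1)$(ii) and dominated convergence with the majorant $M\,\varphi$, get boundedness by enlarging the integral from $A$ to $\bR$, deduce continuity from linearity, compactness from Ascoli--Arzel\`a, and the cone inclusion from $(C_7)$--$(C_8)$. In fact you are more explicit than the paper on the one delicate point --- the paper dismisses the case $n\neq 0$ as ``analogous with small changes,'' whereas you correctly pinpoint that the $n>0$ branch of $(C_1)$ forces $k\,\eta\ge 0$, so $(k\,\eta)^+=k\,\eta$ and the smoothness needed to differentiate under the integral is preserved.
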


\begin{proof}
We will distinguish two different cases:
	
\textbf{CASE I: $n=0$:}
	
\emph{$L_2$ maps $(\widetilde\cC_\varphi,\|\cdot\|_\varphi)$ to $(\widetilde\cC_\varphi,\|\cdot\|_\varphi)$:} Let $u\in \widetilde\cC_\varphi$. Since $k(\cdot,s)\,\eta(s)\in \widetilde\cC_\varphi$ for all $s\in\bR$, it is clear that $\left(\frac{k(\cdot,s)\,\eta(s)}{\varphi(\cdot)}\right)^+\equiv \frac{(k(\cdot,s)\,\eta(s))^+}{\varphi(\cdot)} \in \cC(\bR)$ for all $s\in \bR$. 

Analogously to the proof for $L_1$, from $(C_1)$, (ii), given $\varepsilon\in\bR^+$, there exists some $\delta\in\bR^+$ such that for $t_1,\,t_2 \in \mathbb{R}$, $|t_1-t_2|<\delta$ it is satisfied that
\begin{equation}\label{cont_L_2 u_CASE1}
	\left| \widetilde{L_2 u}(t_1)- \widetilde{L_2 u}(t_2)\right|  \le \varepsilon \, \|u\|_\varphi \int_{A}\omega_0(s)\,\varphi(s)\, \dif s
\end{equation}
and, since $\omega_0\,\varphi\in \Lsp{1}(\bR)$, it can be deduced that $ \widetilde{L_2 u}$ is continuous in $\bR$. 

It is left to see that there exists
		\[\lim\limits_{t\to \pm\infty}\widetilde{L_2 u}(t)=\lim\limits_{t\to \pm\infty}\frac{L_2 u(t)}{\varphi(t)}=\lim\limits_{t\to \pm\infty}\frac{1}{\varphi(t)} \int_A  (k(t,s)\,\eta(s))^+\,u(s)\, \dif s \in \bR.\] 
Reasoning as before, since $k(\cdot,s)\,\eta(s)\in\til{\cC}_\varphi$, then for all $s\in\bR$ it is ensured the existence of 
\[\lim\limits_{t\to \pm\infty}\frac{(k(t,s)\,\eta(s))^+}{\varphi(t)}\le \lim\limits_{t\to \pm\infty}\frac{|k(t,s)\,\eta(s)|}{\varphi(t)}= z_{(\pm)}(s)\in \bR. \]
		
On the other hand,
\[\left|\frac{(k(t,s)\,\eta(s))^+}{\varphi(t)} \,u(s)\right| \le \left|\frac{|k(t,s)\,\eta(s)|}{\varphi(t)} \,|u(s)|\right| \le   M(s) \,|u(s)|= M(s) \, \frac{|u(s)|}{\varphi(s)} \varphi(s) \le \|u\|_\varphi\, M(s) \, \varphi(s)\]
for all $t\in\bR$. From $(C_2)$, $M \, \varphi \in \Lsp{1}(\bR)$ and so  $M \, \varphi \in \Lsp{1}( A )$. Thus, from Lebesgue's Dominated Convergence Theorem,
	\[\lim\limits_{t\to \pm\infty}\frac{1}{\varphi(t)} \int_A  (k(t,s)\,\eta(s))^+\,u(s)\, \dif s= \int_A  \lim\limits_{t\to \pm\infty} \frac{(k(t,s)\,\eta(s))^+}{\varphi(t)} \,u(s)\, \dif s,\]
and since 
	\[\left|\int_A  \lim\limits_{t\to \pm\infty} \frac{(k(t,s)\,\eta(s))^+}{\varphi(t)} \,u(s)\, \dif s\right| \le \int_A  z_{(\pm)}(s) \,|u(s)|\, \dif s \le \|u\|_\varphi \int_A  z_{(\pm)}(s) \,\varphi(s)\, \dif s\in\bR,\]
it can be concluded that there exists $\lim\limits_{t\to \pm\infty}\frac{L_2 u(t)}{\varphi(t)}$ and consequently $L_2 u\in \widetilde\cC_\varphi$.

It is left to see that $L_2 u$ is bounded in $\|\cdot\|_\varphi$.
	\begin{equation*}\begin{split}
		\left\|\widetilde{L_2 u}\right\|_\infty=& \left\|  \frac{L_2 u}{\varphi}\right\|_\infty 
		=  \left\|\frac{1}{\varphi(t)} \int_A  (k(t,s)\,\eta(s))^+\,u(s)\,\dif s \right\|_\infty \\ \le & \,\|u\|_\varphi\,\left\|\frac{1}{\varphi(t)}\int_A  \left(k(t,s)\,\eta(s)\right)^+\, \varphi(s)\, \dif s \right\|_\infty \\
		\le & \, \|u\|_\varphi\, \left\| \frac{1}{\varphi(t)}\int_A  \left|k(t,s)\,\eta(s)\right|\, \varphi(s)\, \dif s \right\|_\infty \\ \le & \, \|u\|_\varphi\, \left\|\frac{1}{\varphi(t)}\int_{-\infty}^{\infty} \left|k(t,s)\,\eta(s)\right|\, \varphi(s)\, \dif s \right\|_\infty,
	\end{split}\end{equation*}
and so, from $(C_2)$, we deduce that
	\begin{equation}\label{bound_L_2 u_CASE1}
		\left\| \widetilde{L_2 u}\right\|_\infty \le \|u\|_\varphi  \left\|\frac{1}{\varphi(t)}\int_{-\infty}^{+\infty} \left|k(t,s)\,\eta(s)\right| \, \varphi(s)\,\dif s \right\|_\infty < +\infty.
	\end{equation}
			
Therefore, $\|L_2 u\|_\varphi<+\infty$. \par 

\emph{Continuity:} It is obvious from the linearity and boundedness of operator $L_2$.
				
\emph{Compactness:} The proof is analogous to the one for operator $L_1$ (Theorem \ref{lcomp}) by considering equations \eqref{bound_L_2 u_CASE1} and \eqref{cont_L_2 u_CASE1} instead of \eqref{bound_L_1 u_CASE1} and \eqref{cont_L_1 u_CASE1}, respectively.
				
\emph{$L_2$ maps $P$ to $P\cap K_\alpha$:} Since $L_2$ has a positive integral kernel, it clearly maps $P$ into $P$. Finally, it maps $P$ into $P\cap K_\alpha$ as a direct consequence of hypothesis $(C_7)$ and $(C_8)$.

\textbf{CASE II: $n\neq 0$:}
The proof is analogous to the one made for operator $L_1$, with some small changes in the line of those introduced in Case I.
\end{proof}
	
Analogously to the two previous theorems, it can be proved that 
\begin{thm}
If $(C_1)-(C_3)$, $(C_9)$ and $(C_{10})$ hold, the operator $T$ is continuous, compact and maps $K_\alpha$ into $K_\alpha$.
\end{thm}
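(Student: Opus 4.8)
The plan is to follow the template of Theorem \ref{lcomp}, the only structural novelty being the nonlinearity carried by $f$. The device that lets the linear estimates survive is hypothesis $(C_3)$: for $u$ with $\|u\|_\varphi\le r$ one has $|u(s)/\varphi(s)|=|\til u(s)|\le r$, so taking $x=\til u(s)$ in $(C_3)$ gives $f(s,u(s))\le \phi_r(s)\,\varphi(s)\le\|\phi_r\|_\infty\,\varphi(s)$ for a.\,e.\ $s$. Thus wherever the proof for $L_1$ used the bound $|u(s)|\le\|u\|_\varphi\,\varphi(s)$, I simply replace the role of $u(s)$ by $f(s,u(s))$ and of $\|u\|_\varphi$ by $\|\phi_r\|_\infty$. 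First I would show $T\colon\widetilde\cC^n_\varphi\to\widetilde\cC^n_\varphi$ is well defined: continuity of $\widetilde{Tu}$ (and of its derivatives when $n>0$) on $\bR$ comes from $(C_1)$\,(iii) (resp.\ its derivative version) exactly as in \eqref{cont_L_1 u_CASE1} and \eqref{cont_L_1 u}, the factor $|u(s)|$ being replaced by $f(s,u(s))$ and bounded via $(C_3)$; the existence of $\lim_{t\to\pm\infty}\widetilde{Tu}(t)$ follows from Lebesgue's Dominated Convergence Theorem with dominating function $\|\phi_r\|_\infty\,M(s)\,\varphi(s)\in\Lsp{1}(\bR)$ (by $(C_2)$), yielding $\lim_{t\to\pm\infty}\widetilde{Tu}(t)=\int_{-\infty}^{\infty} w_{(\pm)}(s)\,f(s,u(s))\,\dif s$ with $w_{(\pm)}(s):=\lim_{t\to\pm\infty}k(t,s)\eta(s)/\varphi(t)$ (note $|w_{(\pm)}|\le z_{(\pm)}$); and boundedness in $\|\cdot\|_\varphi$ follows from \eqref{bound_L_1 u_CASE1}/\eqref{bound_L_1 u} via $(C_2)$.

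The compactness of $T$ on a bounded set $B$ (say $\|u\|_\varphi\le R$ on $B$) is then literally the Arzel\`a--Ascoli argument of Theorem \ref{lcomp}: the boundedness estimate gives a bound on $T(B)$ independent of $u$ once $\|\phi_R\|_\infty$ is substituted for $\|u\|_\varphi$, and the equicontinuity estimate coming from $(C_1)$ yields equicontinuity of $T(B)$ because $\omega_j\,\varphi\in\Lsp{1}(\bR)$. Applying Arzel\`a--Ascoli through the isometry $\Phi$ shows $T(B)$ is relatively compact. I emphasize that this step uses only the pointwise bounds on $f(\cdot,u(\cdot))$ and does \emph{not} presuppose continuity of the map $u\mapsto Tu$, so it may legitimately be invoked when proving continuity below.

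The genuinely new point, and the main obstacle, is the continuity of $T$, which for $L_1$ and $L_2$ came free from linearity. Let $u_m\to u$ in $\widetilde\cC^n_\varphi$ and set $R:=\sup_m\|u_m\|_\varphi<\infty$. Since $\|u_m-u\|_\varphi\to0$ forces $\til u_m\to\til u$ uniformly, $u_m(s)\to u(s)$ pointwise, and the Carath\'eodory continuity of $f(s,\cdot)$ in $(C_3)$ gives $f(s,u_m(s))\to f(s,u(s))$ for a.\,e.\ $s$. In the case $n=0$ one concludes directly: writing $g_m(s):=|f(s,u_m(s))-f(s,u(s))|\le 2\|\phi_R\|_\infty\varphi(s)$, the uniform bound $|k(t,s)\eta(s)|/\varphi(t)\le M(s)$ yields $\|Tu_m-Tu\|_\varphi\le\int_{-\infty}^{\infty} M(s)\,g_m(s)\,\dif s$, whose right-hand side is independent of $t$ and tends to $0$ by Dominated Convergence (dominated by $2\|\phi_R\|_\infty M(s)\varphi(s)\in\Lsp{1}(\bR)$). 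For $n>0$ there is no hypothesis providing a $t$-uniform dominating function for the \emph{derivatives} of $k(\cdot,s)\eta(s)/\varphi(\cdot)$ (only $M$, which controls the zeroth-order term), so the direct bound cannot be iterated on $\partial^j/\partial t^j$. I would circumvent this using the compactness already established: any subsequence of $\{Tu_m\}$ lies in the relatively compact set $\overline{T(\{u_m\})}$, hence has a further subsequence $Tu_{m_k}\to v$ in $\|\cdot\|_\varphi$; convergence in $\|\cdot\|_\varphi$ forces $\widetilde{Tu_{m_k}}(t)\to\til v(t)$ pointwise, while for each fixed $t$ Dominated Convergence (again with dominating function $\|\phi_R\|_\infty M(s)\varphi(s)$) gives $\widetilde{Tu_{m_k}}(t)=\frac1{\varphi(t)}\int_{-\infty}^{\infty} k(t,s)\eta(s)f(s,u_{m_k}(s))\,\dif s\to\widetilde{Tu}(t)$. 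Hence $\til v=\widetilde{Tu}$, i.e.\ $v=Tu$; as every convergent subsequence has the same limit $Tu$, the whole sequence converges, proving continuity for every $n$.

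Finally, that $T$ maps $K_\alpha$ into itself follows from $(C_9)$, $(C_{10})$ and $f\ge0$, mirroring the way $(C_4)$--$(C_5)$ delivered $L_1(P)\subset P\cap K_\alpha$. For $u\in K_\alpha$, $(C_{10})$ gives $\alpha(Tu)\ge\int_{-\infty}^{\infty}\alpha(k(\cdot,s)\eta(s))\,f(s,u(s))\,\dif s$; the integrand is the product of the nonnegative factor $\alpha(k(\cdot,s)\eta(s))\ge0$ (by $(C_9)$) and $f(s,u(s))\ge0$ (by $(C_3)$), and it is integrable since $0\le\alpha(k(\cdot,s)\eta(s))f(s,u(s))\le\|\phi_r\|_\infty\,\alpha(k(\cdot,s)\eta(s))\varphi(s)\in\Lsp{1}(\bR)$ by $(C_{10})$. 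Therefore $\alpha(Tu)\ge0$, that is, $Tu\in K_\alpha$, completing the proof.
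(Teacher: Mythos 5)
Your proposal is correct, and everything except the continuity step follows the paper's own route: reduce to the $L_1$ estimates by replacing $|u(s)|\le\|u\|_\varphi\,\varphi(s)$ with $f(s,u(s))\le\|\phi_r\|_\infty\,\varphi(s)$ from $(C_3)$, and get $T(K_\alpha)\subset K_\alpha$ from $(C_9)$--$(C_{10})$ exactly as $(C_4)$--$(C_5)$ gave $L_1(P)\subset P\cap K_\alpha$. Where you genuinely diverge is in how you close the continuity argument. The paper also takes a sequence $u_m\to u$, deduces $u_m(s)\to u(s)$ a.e.\ and hence $f(s,u_m(s))\to f(s,u(s))$ a.e., exhibits the integrable dominating bound, and then simply invokes Lebesgue's Dominated Convergence Theorem to conclude $Tu_m\to Tu$ in $\widetilde\cC^n_\varphi$ -- leaving implicit the passage from pointwise-in-$t$ convergence of $\widetilde{Tu_m}(t)$ (and its derivatives) to convergence in the supremum norm $\|\cdot\|_\varphi$. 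You fill that gap in two different ways: for $n=0$ you exploit the $t$-uniform domination $|k(t,s)\eta(s)|/\varphi(t)\le M(s)$ with $M\varphi\in\Lsp{1}(\bR)$ to bound $\|Tu_m-Tu\|_\varphi$ by a single integral independent of $t$, which dominated convergence sends to zero; for $n>0$, where no such uniform dominating function for the $t$-derivatives of the kernel is hypothesized, you use the already-established relative compactness of $T(B)$ together with the subsequence-uniqueness argument (every convergent subsequence must converge to $Tu$ by pointwise identification of the limit). This buys a fully rigorous proof of norm convergence at the cost of an extra compactness appeal; the paper's version is shorter but, as written, only justifies pointwise convergence of the extensions. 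Your observation that the compactness step does not presuppose continuity, so it may be used in proving continuity, is the key point that makes the $n>0$ argument legitimate.
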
		
\begin{proof}
The proof, except for the continuity, is analogous to previous theorems but taking into account the fact that
\[f(s,u(s))=f\left(s,\frac{u(s)}{\varphi(s)}\varphi(s)\right)\le \phi_{\|u\|_\varphi}(s) \,\varphi(s) \le \left\|\phi_{\|u\|_\varphi}\right\|_\infty \,\varphi(s).  \]

\emph{Continuity:} Since $T$ is not a linear operator, continuity can not be deduced from boundedness, on the contrary to previous theorems. Therefore, we shall prove that operator $T$ is continuous in a different way:
 
\textbf{CASE I: $n= 0$:}

Let $\{u_n\}_{n \in \bN}$ be a sequence which converges to $u$ in $\widetilde{\mathcal{C}}_{\varphi}$. Then, there exists some $R\in\bR$ such that $\|u_n\|_\varphi\le R$ for all $n\in\bN$ and it holds that 
\begin{equation}\label{ec-bound_f_cont}
f(s,u_n(s))=f\left(s,\frac{u_n(s)}{\varphi(s)}\varphi(s)\right)\le \phi_{R}(s) \,\varphi(s) \le \left\|\phi_{R}\right\|_\infty \,\varphi(s). 
\end{equation}

Moreover, $\lim\limits_{n\rightarrow\infty}\|u_n-u\|_\varphi=0$ implies that $\lim\limits_{n\rightarrow\infty}\|\frac{u_n}{\varphi}-\frac{u}{\varphi}\|_\infty=0$, from where we deduce that $\frac{u_n(s)}{\varphi(s)} \to \frac{u(s)}{\varphi(s)}$ for a.\,e. $s \in \mathbb{R}$. Therefore,  $u_n(s) \to u(s)$ for a.\,e. $s \in \mathbb{R}$. 

Thus it is clear that
\begin{equation*}\begin{split}
\left| \widetilde{T u_n}(t)\right| &=\left| \frac{1}{\varphi(t)} \, \int_{-\infty}^{+\infty} k(t,s)\,\eta(s)\,f(s,u_n(s))\,\dif s \right| \le \frac{1}{\varphi(t)} \, \int_{-\infty}^{+\infty} |k(t,s)\,\eta(s)|\,f(s,u_n(s))\,\dif s \\
& \le \left\|\phi_{R}\right\|_\infty \, \frac{1}{\varphi(t)} \, \int_{-\infty}^{+\infty} |k(t,s)\,\eta(s)|\,\varphi(s)\,\dif s
\le \left\|\phi_{R}\right\|_\infty \left\|\frac{1}{\varphi(t)}\int_{-\infty}^{+\infty} \left|k(t,s)\,\eta(s)\right|\varphi(s)\,\dif s\right\|_\infty  
\end{split}\end{equation*}
for all $t\in\bR$ and we obtain, by application of Lebesgue's Dominated Convergence Theorem, that $T u_n \to T u$ in $\widetilde{\mathcal{C}}_{\varphi}$. Hence, operator $T$ is continuous.

\textbf{CASE II: $n\neq 0$:}

Let $\{u_n\}_{n \in \bN}$ be a sequence which converges to $u$ in $\widetilde{\mathcal{C}}_{\varphi}^n$. As in Case I, there exists some $R\in\bR$ such that $\|u_n\|_\varphi\le R$ for all $n\in\bN$ and \eqref{ec-bound_f_cont} holds.

Moreover, as in Case I, $u_n(s) \to u(s)$ for a.\,e. $s \in \mathbb{R}$. 

Thus,
\begin{equation*}\begin{split}
\left|\frac{\partial^j  \widetilde{L_1 u_n}}{\partial t^j}(t)\right| &  \le \left\|\phi_{R}\right\|_\infty \, \sum_{l=0}^j {j \choose l} \left\|\frac{\partial ^{j-l}}{\partial t^{j-l}}\frac{1}{\varphi}(t)\int_{-\infty}^{+\infty} \left|\frac{\partial ^l k}{\partial t^l}(t,s)\,\eta(s)\right|\varphi(s)\,\dif s\right\|_\infty
\end{split}\end{equation*}
for all $t\in\bR$ and then, by application of Lebesgue's Dominated Convergence Theorem, $T u_n \to T u$ in $\widetilde{\mathcal{C}}_{\varphi}^n$. Hence, operator $T$ is continuous.
\end{proof}

The following Theorem is analogous to \cite[Theorem 4.5]{gi-pp-ft} and is proven  using the facts that the considered operators leave $P$ invariant and $P$ is total, combined with Krein-Rutman Theorem.  
\begin{thm}\label{specrad} Assume conditions $(C_1)$, $(C_2)$ and $(C_4)-(C_8)$ hold. Then, it holds that $r(L_1)>0$ and it is an eigenvalue of $L_1$ with an eigenfunction in $P$. An analogous result holds for $L_2 $.
\end{thm}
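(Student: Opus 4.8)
The plan is to verify, for $\cL=L_1$ on the Banach space $X=\widetilde\cC^n_\varphi$ with the cone $K=P$, the three hypotheses of the Krein--Rutman Theorem stated above: that $P$ is a total cone, that $L_1$ is a compact operator mapping $P$ into itself, and that $r(L_1)>0$. The invariance and compactness are exactly the content of Theorem \ref{lcomp} (which even gives $L_1(P)\subset P\cap K_\alpha\subset P$ under $(C_1)$, $(C_2)$, $(C_4)$, $(C_5)$), so nothing new is required there. For totality I would prove the stronger fact $P-P=\widetilde\cC^n_\varphi$: for every $c\ge 0$ the function $c\,\varphi$ lies in $P$, and given $u\in\widetilde\cC^n_\varphi$ with associated $\til u\in\cC^n(\ol\bR,\bR)$, choosing $c\ge\|\til u\|_\infty$ makes $u+c\,\varphi=\varphi(\til u+c)\ge 0$, whence $u=(u+c\,\varphi)-c\,\varphi\in P-P$.

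The crucial and only delicate point is $r(L_1)>0$, for which I would apply Theorem \ref{lowbound}: it suffices to produce $\lambda_0>0$ and $v\in P\setminus\{0\}$ with $L_1 v\curlyeqsucc_P \lambda_0\,v$, that is $L_1 v-\lambda_0\,v\ge 0$ on $\bR$. The positivity is furnished entirely by $(C_6)$, which provides a finite union of compact intervals $A$ with $k(t,s)\,\eta(s)\ge 0$ for $t\in A$ and a.\,e.\ $s$, and $\inf_{t\in A}\int_A k(t,s)\,\eta(s)\,\dif s=1/\widetilde M>0$. As test function I would take a smooth bump $v$ with $0\le v\le 1$, $\supp v\subset A$, and $v\equiv 1$ on a slightly smaller compact set $A_\delta\subset A$; being compactly supported, $v$ lies in $P\cap\widetilde\cC^n_\varphi$ (its quotient $v/\varphi$ is $\cC^n$ with compact support and so extends to $\ol\bR$ by zero). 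Since $v$ vanishes off $A$ and $|k(t,s)\,\eta(s)|=k(t,s)\,\eta(s)$ whenever $t\in A$, for each $t\in A$ one obtains
\[
L_1 v(t)=\int_A |k(t,s)\,\eta(s)|\,v(s)\,\dif s\ge \int_{A_\delta} k(t,s)\,\eta(s)\,\dif s=:c_\delta(t),
\]
while $L_1 v(t)\ge 0=\lambda_0\,v(t)$ for $t\notin A$. Putting $\lambda_0:=\inf_{t\in A}c_\delta(t)$ and using $0\le v\le 1$ gives $L_1 v(t)\ge\lambda_0\ge\lambda_0\,v(t)$ on $A$ as well, i.e.\ the required cone inequality.

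The main obstacle is to confirm that $\lambda_0>0$, i.e.\ that $\inf_{t\in A}\int_{A_\delta} k(t,s)\,\eta(s)\,\dif s$ remains strictly positive for $A_\delta$ close enough to $A$. I would derive this from $(C_6)$ together with the uniform absolute continuity of $t\mapsto\int_A k(t,s)\,\eta(s)\,\dif s$ on the compact set $A$ — a continuity already implicit in the proof that $L_1$ maps into $\widetilde\cC^n_\varphi$ — which lets one choose $A_\delta$ so that the deficit $\sup_{t\in A}\int_{A\setminus A_\delta} k(t,s)\,\eta(s)\,\dif s$ is below $1/(2\widetilde M)$, yielding $\lambda_0\ge 1/(2\widetilde M)>0$ and hence $r(L_1)\ge 1/(2\widetilde M)$.

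Once $r(L_1)>0$ is established, the Krein--Rutman Theorem applies directly and gives that $r(L_1)$ is an eigenvalue of $L_1$ with an eigenfunction in $P\setminus\{0\}$. For $L_2$ the argument is identical: invariance and compactness come from the corresponding theorem (under $(C_1)$, $(C_2)$, $(C_6)$--$(C_8)$), and the same bump $v$ satisfies $L_2 v(t)=\int_A (k(t,s)\,\eta(s))^+v(s)\,\dif s=\int_A k(t,s)\,\eta(s)\,v(s)\,\dif s$ for $t\in A$, so the lower bound from $(C_6)$ transfers verbatim, giving $r(L_2)>0$ and then the eigenvalue statement via Krein--Rutman.
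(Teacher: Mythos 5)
Your proof is correct and follows essentially the same route as the paper: a lower bound $r(L_1)\ge 1/(2\widetilde M)>0$ obtained from Theorem \ref{lowbound} with a cut-off test function built from $(C_6)$, followed by the Krein--Rutman Theorem on the total cone $P$ using the compactness and invariance supplied by Theorem \ref{lcomp}. The only difference is in the construction of the test function: the paper extends the indicator of $A$ outward to a larger compact set $B$ via Whitney's extension theorem and verifies the cone inequality on $B$, whereas you shrink the plateau inward to $A_\delta\subset A$ with a standard smooth bump and recover the same constant by uniform absolute continuity (which is indeed justified, since $|k(t,s)\,\eta(s)|\le\varphi(t)\,M(s)$ with $M\,\varphi\in L^1(\bR)$ gives $M\in L^1(A)$); both variants land on the same bound and the same conclusion, and your explicit verification that $P$ is total is a detail the paper leaves implicit.
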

\begin{proof}
We will prove the result for $L_1$. Consider $v\in P$ such that $v\equiv 1$ in $A$. Then, for $t\in A$,
\[L_1 v(t)=\int_{-\infty}^{\infty}|k(t,s)\,\eta(s)|\,v(s)\,\dif s \ge \int_{A}|k(t,s)\,\eta(s)|\,v(s)\,\dif s=\int_{A}k(t,s)\,\eta(s)\,\dif s \ge \frac{1}{\widetilde{M}}.\]

Then, there exists some compact set $B$, with $A\subset B$ such that when $t\in B$,
\[\int_{A}|k(t,s)\,\eta(s)|\,\dif s \ge \frac{1}{2\,\widetilde{M}}.\]

Now, defining $u(t)=1$ for $t\in A$ and $u(t)=0$ when $t\notin B$, from Whitney's Extension Theorem \cite[Theorem I]{whitney}, $u$ can be extended to $\bR$ (and this extension will be also denoted by $u$) as a function of class $n$. Moreover, from the proof of Whitney's Extension Theorem, it is possible to deduce that this extension will be upperly bounded by $1$.

Finally, since $\lim\limits_{t\to \pm \infty} u(t)=0$, it is clear that $u\in \widetilde{\cC}_\varphi^n$ with independence of the choice of $\varphi$.

Therefore, for $t\in B$, it holds that 
\begin{equation*}\begin{split}
L_1 u(t)&=\int_{-\infty}^{\infty}|k(t,s)\,\eta(s)|\,u(s)\,\dif s \ge \int_{A}|k(t,s)\,\eta(s)|\,u(s)\,\dif s=\int_{A}|k(t,s)\,\eta(s)|\,\dif s \ge \frac{1}{2\,\widetilde{M}} \\ &\ge \frac{1}{2\,\widetilde{M}} \, u(t), 
\end{split}\end{equation*}
and for $t\notin B$,
\[L_1 u(t)=0 = \frac{1}{2\,\widetilde{M}}\,u(t). \]

Thus, as a consequence of Theorem \ref{lowbound}, we conclude that $r(L_1)\ge \frac{1}{2\,\widetilde{M}}>0$.

Finally, since $P$ is a total cone and $L_1$ maps $P$ into $P$, Krein-Rutman Theorem assures that $r(L_1)$ is an eigenvalue with an eigenvector $\phi \in P\setminus\{0\}$.
\end{proof}

\begin{rem}
	As a consequence of Theorems \ref{lcomp} and \ref{specrad}, we know that the eigenfunctions mentioned above are in $P\cap K_\alpha$.
\end{rem}

We will define the following operator on $\cC^n(A,\bR)$
	\[\bar{L}u(t):=\int_A  k(t,s)\,\eta(s)\,u(s)\, \dif s, \quad t\in A, \]
	and consider the cone  $P_{ A }$ of positive functions in $\cC^n(A,\bR)$.
	
As with previous operators, we will prove that
\begin{thm}\label{barL}
Assume that conditions $(C_1)$, $(C_2)$ and $(C_6)-(C_8)$ hold.	Then, operator $\bar{L}$ is compact and maps $P_A$ into $P_A$.
\end{thm}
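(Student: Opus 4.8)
The plan is to follow the proofs of Theorem~\ref{lcomp} (for $L_1$) and of the analogous statement for $L_2$, exploiting the crucial simplification that here the domain $A$ is compact, so that no behaviour at $\pm\infty$ has to be controlled and Arzel\`a--Ascoli can be applied directly on $A$. Throughout I will use that, since $\varphi\in\cC^n(\bR,\bR^+)$ is continuous and strictly positive, on the compact set $A$ it is bounded above and bounded away from zero, and all derivatives $\varphi^{(l)}$ and $(1/\varphi)^{(l)}$, $l=0,\dots,n$, are bounded on $A$. Consequently the $\varphi$-weighted quantities appearing in $(C_1)$--$(C_2)$ are, on $A$, comparable to their unweighted counterparts, and by $(C_2)$ the integrals over $A$ of $\omega_j$ and of $\left|\frac{\partial^l k}{\partial t^l}(t,\cdot)\,\eta(\cdot)\right|$ are finite, uniformly in $t\in A$.

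First I would check that $\bar Lu\in\cC^n(A,\bR)$ for every $u\in\cC^n(A,\bR)$. As in the proof of Theorem~\ref{lcomp}, Leibniz's integral rule (justified by the integrability furnished by $(C_2)$) gives, for $t\in A$ and $j=0,\dots,n$,
\[
\frac{\partial^j\bar Lu}{\partial t^j}(t)=\int_A\frac{\partial^j k}{\partial t^j}(t,s)\,\eta(s)\,u(s)\,\dif s.
\]
Writing $\frac{\partial^j(k\eta)}{\partial t^j}=\sum_{l=0}^j{j \choose l}\varphi^{(l)}\,\frac{\partial^{j-l}(k\eta/\varphi)}{\partial t^{j-l}}$ and using the equicontinuity estimates of $(C_1)$ for $\frac{\partial^{j-l}(k(\cdot,s)\eta(s)/\varphi(\cdot))}{\partial t^{j-l}}$ together with the boundedness of the $\varphi^{(l)}$ on $A$, the family $\{\frac{\partial^j k}{\partial t^j}(\cdot,s)\eta(s)\}_s$ is equicontinuous on $A$ with an integrable majorant; hence each $\frac{\partial^j\bar Lu}{\partial t^j}$ is continuous on $A$, so $\bar Lu\in\cC^n(A,\bR)$. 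The same estimates, with $\|u\|$ pulled out of the integral, give $\|\bar Lu\|_{\cC^n(A)}\le C\,\|u\|_{\cC^n(A)}$, so that $\bar L$ is a bounded, hence continuous, linear operator.

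For compactness I would take a bounded set $B\subset\cC^n(A,\bR)$ and show $\bar L(B)$ is relatively compact. The boundedness estimate of the previous step provides a uniform bound on $\bar L(B)$ and on all its derivatives up to order $n$, while the equicontinuity estimates, with $\|u\|$ replaced by the bound on $B$ and using $\int_A\omega_j\,\dif s<\infty$, show that $\{\frac{\partial^j\bar Lu}{\partial t^j}:u\in B\}$ is equicontinuous for each $j\le n$. Since $A$ is compact, Arzel\`a--Ascoli applied successively to the derivatives of order $0,\dots,n$ yields that $\bar L(B)$ is relatively compact in $\cC^n(A,\bR)$; this is exactly where the compactness of $A$ makes the argument strictly simpler than for $L_1$ and $L_2$, as no stability at $\pm\infty$ is required. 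Finally, $\bar L(P_A)\subset P_A$ is immediate: for $u\ge0$ on $A$ and $t\in A$, hypothesis $(C_6)$ gives $k(t,s)\,\eta(s)\ge0$ for a.\,e.\ $s$, so the integrand $k(t,s)\,\eta(s)\,u(s)$ is nonnegative and $\bar Lu(t)\ge0$.

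The only genuinely delicate point is the first step --- differentiating under the integral sign and transferring the $\varphi$-weighted equicontinuity of $(C_1)$ to the unweighted kernel on $A$ --- but this is precisely the computation already carried out for $L_1$, now restricted to the compact set $A$. Alternatively, one can avoid repeating that computation by observing that on $A\times A$ one has $k(t,s)\,\eta(s)=(k(t,s)\,\eta(s))^+$ by $(C_6)$, so that $\bar L$ factors as $\bar L=R\circ L_2\circ E$, where $E\colon\cC^n(A,\bR)\to\widetilde\cC^n_\varphi$ is a bounded linear $\cC^n$-extension operator composed with a fixed compactly supported cut-off equal to $1$ on $A$ (which lands the extension in $\widetilde\cC^n_\varphi$) and $R\colon\widetilde\cC^n_\varphi\to\cC^n(A,\bR)$ is the bounded restriction map; since $L_2 v$ depends only on $v|_A$, one checks $R\circ L_2\circ E=\bar L$, and compactness of $\bar L$ then follows at once from the already established compactness of $L_2$, with positivity again coming from $(C_6)$.
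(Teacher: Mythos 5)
Your proposal is correct, and its closing ``alternative'' is in fact the paper's own proof: the paper establishes the result by constructing (via Whitney's Extension Theorem applied to $f$ on $A$, set to zero outside a bounded neighbourhood $B\supset A$) an extension map $i\colon\cC^n(A,\bR)\to\widetilde\cC^n_\varphi$ and a restriction map $\pi$, checking that $\pi\circ L_2\circ i=\bar L$ precisely because $(C_6)$ forces $(k(t,s)\eta(s))^+=k(t,s)\eta(s)$ for $t\in A$ and a.e.\ $s$, and then deducing compactness of $\bar L$ from the already proven compactness of $L_2$; positivity again comes from $(C_6)$. This is exactly your $\bar L=R\circ L_2\circ E$ factorization, so that part of your argument matches the paper essentially verbatim (the paper, like you, takes the boundedness of the extension and restriction maps largely for granted). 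Your primary, direct route --- Leibniz's rule, transfer of the $\varphi$-weighted equicontinuity of $(C_1)$ to the unweighted kernel on the compact set $A$, and Arzel\`a--Ascoli on $A$ --- is a genuinely different and more self-contained argument that avoids Whitney's theorem; its one soft spot is the equicontinuity of the products $\varphi^{(l)}(t)\,\frac{\partial^{j-l}(k(\cdot,s)\eta(s)/\varphi(\cdot))}{\partial t^{j-l}}(t)$, which additionally requires a pointwise bound in $t$ on these weighted derivatives by an $s$-integrable function; the hypotheses supply such a majorant explicitly only for $j=0$ (the function $M$ of $(C_2)$), so for $n>0$ you would need to extract the analogous bound from $(C_1)$--$(C_2)$ rather than merely cite it. Since the factorization argument closes the proof regardless, the proposal as a whole stands.
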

	
\begin{proof}
Let $f\in\cC^n(A,\bR)$ and $B\subset\bR$ an open and bounded set such that $A\subset B$. Define now $g(t)=f(t)$ for $t\in A$ and $g(t)=0$ for $t\in \bR\setminus B$. Then, from Whitney's Extension Theorem \cite[Theorem I]{whitney}, $g$ can be extended to $\bR$ as a function of class $n$, that is, there exists an extension of $f$ to $\bR$ as a function of class $n$ such that this extension vanishes for $t\in \bR\setminus B$. Obviously, this extension of $f$ belongs to $\cC^n_\varphi(\bR)$.

Now, denote by $i$ the function which maps a function in $\cC^n(A,\bR)$ to the aforementioned extension in $\cC^n_\varphi(\bR)$ and by $\pi$ the map which takes every function in $\cC^n_\varphi(\bR)$ to its restriction to the set $A$ (which clearly belongs to $\cC^n(A,\bR)$). We obtain the following diagram:
	
\begin{center}
	\tikzset{node distance=3cm, auto}
	\begin{tikzpicture}
	\node (1) {$\cC^n_\varphi(\bR)$};
	\node (2) [right of=1] {$\cC^n_\varphi(\bR)$};
	\node (3) [below of=1, yshift=1.5cm] {$\cC^n(A,\bR)$};
	\node (4) [right of=3] {$\cC^n(A,\bR)$};
	\draw[->] (1) to node {$L_2$} (2);
	\draw[->] (3) to node {$\bar{L}$} (4);
	\draw[->] (3) to node {$i$} (1);
	\draw[->] (2) to node {$\pi$} (4);
	\end{tikzpicture}
\end{center}

Let us show now that it is commutative. Consider $f\in \cC^n(A,\bR)$. It holds that
\begin{equation*}\begin{split}
(\pi\circ L_2 \circ i) (f)(t) & =\pi\left(\int_{A}\left(k(t,s)\,\eta(s)\right)^+\,  i(f)(s)\,\dif s \right) = \pi\left(\int_{A}\left(k(t,s)\,\eta(s)\right)^+\, f(s)\,\dif s \right) \\
&=\int_{A}k(t,s)\,\eta(s)\, f(s)\,\dif s=\bar{L}(f)(t), \quad t\in A.
\end{split}\end{equation*}

Now, since $L_2$ is compact and both $i$ and $\pi$ are continuous, we deduce that $\bar{L}$ is a compact operator.

Finally, from $(C_6)$ it is clear that $\bar{L}$ maps $P_A$ into $P_A$.
\end{proof}

\begin{rem}
We point out that, in the previous proof, Whitney's extension theorem can be used as a consequence of the fact that $A$ is a finite union of compact intervals.
\end{rem}

\begin{thm}It holds that $r(\bar{L})>0$ and it is an eigenvalue of $\bar{L}$ with an eigenfunction in $P_A$.
\end{thm}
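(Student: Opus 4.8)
The plan is to mirror the proof of Theorem \ref{specrad}, combining the lower bound for the spectral radius furnished by Theorem \ref{lowbound} with the Krein--Rutman Theorem. The argument is actually cleaner here than for $L_1$, because on the compact set $A$ the constant functions already lie in $\cC^n(A,\bR)$, so no Whitney extension is needed to produce a suitable test function.

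First I would establish that $r(\bar L)>0$. Take $v\equiv 1$ on $A$, which is clearly an element of $P_A\setminus\{0\}$. By the nonnegativity of $k(\cdot,s)\,\eta(s)$ on $A$ from $(C_6)$ together with the defining inequality therein, for every $t\in A$ one has
\[\bar L v(t)=\int_A k(t,s)\,\eta(s)\,\dif s\ge \frac{1}{\widetilde M}=\frac{1}{\widetilde M}\,v(t),\]
so that $\bar L v-\frac{1}{\widetilde M}\,v\in P_A$, that is, $\bar L v\curlyeqsucc_{P_A}\frac{1}{\widetilde M}\,v$. Since, by Theorem \ref{barL}, $\bar L$ is a (bounded) linear operator leaving $P_A$ invariant, Theorem \ref{lowbound} applied with $\lambda_0=1/\widetilde M$ yields $r(\bar L)\ge 1/\widetilde M>0$.

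Next I would check that $P_A$ is a total cone in $\cC^n(A,\bR)$. Given any $f\in\cC^n(A,\bR)$, compactness of $A$ forces $f$ to be bounded, so choosing a constant $c\ge \max_{t\in A}|f(t)|$ makes $f+c\in P_A$; since the constant $c$ also belongs to $P_A$, we obtain $f=(f+c)-c\in P_A-P_A$. Hence $P_A-P_A=\cC^n(A,\bR)$ and $P_A$ is total. This is precisely the step where the compactness of $A$ does the work that the Whitney-extension device did in Theorem \ref{specrad}.

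Finally, $\bar L$ is compact and maps $P_A$ into $P_A$ by Theorem \ref{barL}, $P_A$ is total, and $r(\bar L)>0$ from the first step, so the Krein--Rutman Theorem guarantees that $r(\bar L)$ is an eigenvalue of $\bar L$ with an eigenfunction $\phi\in P_A\setminus\{0\}$. I do not expect any serious obstacle in this argument; the only point meriting a moment's care is the verification that $P_A$ is total, which on the compact domain $A$ is immediate.
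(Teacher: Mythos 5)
Your proof is correct, but it reaches the lower bound $r(\bar L)>0$ by a different route than the paper. The paper takes the eigenfunction $\psi\in P$ of $L_2$ produced in Theorem \ref{specrad}, observes that for $t\in A$ one has $\bar L\,\psi|_A(t)=L_2\psi(t)=r(L_2)\,\psi|_A(t)$ (since $(k(t,s)\eta(s))^+=k(t,s)\eta(s)$ on $A$ by $(C_6)$), and then invokes Theorem \ref{lowbound} to get $r(\bar L)\ge r(L_2)>0$; it does not spell out the Krein--Rutman step or the totality of $P_A$, and it also leaves implicit the (easy but necessary) check that $\psi|_A\not\equiv 0$. You instead test against the constant function $1$ on $A$ and use the quantitative part of $(C_6)$ directly, obtaining the self-contained bound $r(\bar L)\ge 1/\widetilde M>0$, and you make the totality of $P_A$ and the application of Krein--Rutman explicit. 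What each approach buys: the paper's choice of test function yields the sharper and structurally useful inequality $r(\bar L)\ge r(L_2)$, which is exactly what is re-derived and used later in the proof of part $(3)$ of Theorem \ref{thmindeig} (note that $r(L_2)\ge 1/\widetilde M$ by the final Lemma, so your bound is a priori weaker); your argument is more elementary, does not depend on Theorem \ref{specrad} at all, and sidesteps the non-vanishing issue for $\psi|_A$. For the statement as written, both are fully adequate, and your write-up is arguably the more complete of the two.
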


\begin{proof}
Let $\psi$ be the eigenfunction related to $L_2$ whose existence is proved in Theorem \ref{specrad}. Then, if we consider its restriction to $A$, $\psi|_{A}$, it is clear that for $t\in A$
\[\bar{L}\psi|_{A}(t)=L_2\psi(t)=r(L_2)\,\psi(t)=r(L_2)\,\psi|_{A}(t), \]
and so from Theorems \ref{lowbound} and \ref{specrad}, we deduce that $r(\bar{L})\ge r(L_2)>0$.
\end{proof}

We define the following numbers in the extended real line:
\begin{equation*}
\begin{split}
f^{0}=\varlimsup_{x\to 0}\frac{\sup\limits_{t \in \bR} \, \dfrac{f(t,x\varphi(t))}{\varphi(t)}}{|x|},& \quad
f_{0}=\varliminf_{x\to 0}\frac{\inf\limits_{t \in A} \,\dfrac{f(t,x\varphi(t))}{\varphi(t)}}{|x|}, \\[.2pt]
f^{\infty}=\varlimsup_{|x| \to+\infty}\frac{\sup\limits_{t \in \bR} \, \dfrac{f(t,x\varphi(t))}{\varphi(t)}}{|x|},&
\quad f_{\infty}=\varliminf_{|x| \to+\infty} \frac{\inf\limits_{t \in A} \, \dfrac{f(t,x\varphi(t))}{\varphi(t)}}{|x|}.
\end{split}
\end{equation*}

To prove that the index of some subsets of a cone is $1$ or $0$, we will use the following well-known sufficient conditions.

Let $K$ be a cone in a Banach space $X$. If $\Omega\subset X$ is an open and bounded subset of $K$ (in the relative topology), we denote by $\overline{\Omega}$ and $\partial \Omega$, respectively,
its closure and its boundary relative to $K$. Moreover, we will note $\Omega_K=\Omega \cap K$, which is an open subset of $K$ in the relative topology.

\begin{lem}\label{lemind}
	Let $\Omega$ be an open bounded set in $K$ with $0\in \Omega_{K}$ and $\overline{\Omega_{K}}\ne K$. Assume that $F:\overline{\Omega_{K}}\to K$ is
	a continuous compact map such that $x\neq Fx$ for all $x\in \partial \Omega_{K}$. Then the fixed point index\index{fixed point index} $i_{K}(F, \Omega_{K})$ has the following properties.
	\begin{itemize}
		\item[(1)] If there exists $e\in K\backslash \{0\}$ such that $x\neq Fx+\lambda e$ for all $x\in \partial \Omega_K$ and all $\lambda
		\ge0$, then $i_{K}(F, \Omega_{K})=0$.
		\item[(2)] If  $\lambda x \neq Fx$ for all $x\in \partial \Omega_K$ and for every $\lambda \geq 1$, then $i_{K}(F, \Omega_{K})=1$.
		\item[(3)] If $i_K(F,\Omega_K)\ne0$, then $F$ has a fixed point in $\Omega_K$.
		\item[(4)] Let $\Omega^{1}$ be open in $X$ with $\overline{\Omega^{1}}\subset \Omega_K$. If $i_{K}(F, \Omega_{K})=1$ and
		$i_{K}(F, \Omega_{K}^{1})=0$, then $F$ has a fixed point in $\Omega_{K}\backslash \overline{\Omega_{K}^{1}}$. The same result holds if
		$i_{K}(F, \Omega_{K})=0$ and $i_{K}(F, \Omega_{K}^{1})=1$.
	\end{itemize}
\end{lem}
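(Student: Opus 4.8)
These four statements are the classical properties of the fixed point index for compact maps defined on a subset of a cone, and the plan is to deduce each of them from the four defining axioms of that index --- normalization, homotopy invariance, additivity (excision) and the solution property --- exactly as in the standard references on the topic (see \cite{gi-pp-ft} and the literature cited therein). Indeed, property (3) requires no argument at all: it is precisely the solution (existence) axiom, which asserts that $i_K(F,\Omega_K)\neq 0$ forces $F$ to have a fixed point in $\Omega_K$. So the work reduces to establishing (1), (2) and (4), each of which is a short homotopy or excision computation.

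For (2), I would introduce the homotopy $H(t,x)=t\,Fx$ for $t\in[0,1]$, which maps $\overline{\Omega_K}$ into $K$ since $K$ is a cone and is compact because $F$ is. The point is to check that $x\neq H(t,x)$ on $\partial\Omega_K$ for every $t\in[0,1]$: at $t=0$ this holds because $0\in\Omega_K$ lies in the relative interior, hence $0\notin\partial\Omega_K$; for $t\in(0,1]$ an equality $x=t\,Fx$ would give $Fx=\lambda x$ with $\lambda=1/t\ge 1$, contradicting the hypothesis. Homotopy invariance then yields $i_K(F,\Omega_K)=i_K(H(0,\cdot),\Omega_K)=i_K(0,\Omega_K)$, and the last index equals $1$ by normalization, since the constant map $0$ takes its value $0\in\Omega_K$.

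For (1), I would use instead the homotopy $H(\lambda,x)=Fx+\lambda e$ with $\lambda\ge 0$, which again sends $\overline{\Omega_K}$ into $K$ because $e\in K$. Here lies the only genuinely content-bearing step, namely an a priori bound: since $F$ is compact, $F(\overline{\Omega_K})$ is bounded, say $\|Fx\|\le M$, and $\overline{\Omega_K}$ is bounded, say $\|x\|\le R$; a fixed point $x=Fx+\lambda e$ would then force $\lambda\,\|e\|\le\|x\|+\|Fx\|\le R+M$. Choosing $\lambda_0>(R+M)/\|e\|$, the map $H(\lambda_0,\cdot)$ has no fixed point in $\overline{\Omega_K}$, so its index is $0$. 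The hypothesis $x\neq Fx+\lambda e$ on $\partial\Omega_K$ for all $\lambda\ge 0$ makes the homotopy admissible on $[0,\lambda_0]$, and homotopy invariance gives $i_K(F,\Omega_K)=i_K(H(\lambda_0,\cdot),\Omega_K)=0$.

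Finally, for (4) I would appeal to additivity (excision). The sets $\Omega^1_K$ and $\Omega_K\setminus\overline{\Omega^1_K}$ are disjoint open subsets of $\Omega_K$, and $F$ has no fixed point on the excised part of the boundary: none on $\partial\Omega_K$ by the standing hypothesis of the lemma, and none on $\partial\Omega^1_K$ because $i_K(F,\Omega^1_K)$ is assumed defined. Hence $i_K(F,\Omega_K)=i_K(F,\Omega^1_K)+i_K(F,\Omega_K\setminus\overline{\Omega^1_K})$. In the first case this reads $1=0+i_K(F,\Omega_K\setminus\overline{\Omega^1_K})$, so the index over $\Omega_K\setminus\overline{\Omega^1_K}$ is $1\neq 0$; in the symmetric case it reads $0=1+i_K(F,\Omega_K\setminus\overline{\Omega^1_K})$, giving index $-1\neq 0$. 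Either way the solution property (i.e.\ part (3)) produces a fixed point in $\Omega_K\setminus\overline{\Omega^1_K}$. I do not expect any serious obstacle: apart from the elementary a priori estimate in (1), the entire lemma is a direct transcription of the index axioms.
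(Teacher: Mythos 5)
The paper does not actually prove this lemma: it is quoted as a list of ``well-known sufficient conditions'' for the fixed point index, with the proof deferred to the standard literature. Your argument is precisely the classical derivation from the index axioms that those references contain, and it is correct in all four parts: (3) is the solution axiom verbatim; (2) follows from the homotopy $H(t,x)=t\,Fx$, admissible because $0\notin\partial\Omega_K$ and because $x=t\,Fx$ with $t\in(0,1]$ would give $\lambda x=Fx$ with $\lambda=1/t\ge 1$, together with normalization $i_K(0,\Omega_K)=1$; (1) uses the homotopy $Fx+\lambda e$ with the a priori bound $\lambda\|e\|=\|x-Fx\|\le R+M$ on the bounded set $\overline{\Omega_K}$ to reach a parameter value with no fixed points, hence index $0$; and (4) is additivity plus part (3), with the resulting index over $\Omega_K\setminus\overline{\Omega_K^1}$ equal to $1$ or $-1$, in either case nonzero. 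The only point worth making explicit is the one you already note in passing for (4): the hypothesis that $i_K(F,\Omega_K^1)$ takes a definite value presupposes $x\ne Fx$ on $\partial\Omega_K^1$, which is what legitimizes the excision. Nothing is missing.
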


\begin{dfn} Let, $X,Y,Z$ be topological spaces, $Y$ Hausdorff. Let $f:X\to Y$, $g:X\to Z$. Let $z_0\in g(X)'$. We say that $L$ is the limit of $f$ when $g(x)$ tends to $z_0$ if for every neighborhood $N_Y$ of $L$ there exists a neighborhood $N_Z$ of $z_0$ such that $f\(g^{-1}\(N_Z\backslash\{z_0\}\)\)\subset N_Y$. We write
\[\lim_{g(x)\to z_0}f(x)=L.\]
\end{dfn}

A particular case of this definition would be the notion of limit in the case of the topology occurring when studying Stieltjes derivatives with respect to a function $g$ (cf. \cite{PoRo,FP2016}).

In order to prove the following Theorem, we adapt some of the proofs of \cite[Theorems 3.2-3.5]{jwkleig} to this new context.

\begin{thm}\label{thmindeig} Assume that $(C_1)-(C_{10})$ hold. 
Assume also that there exists $\beta:\cC^n_\varphi \rightarrow [0,\infty)$ such that
	\[\lim_{\beta(u)\to0}\|u\|_\varphi=0, \quad \lim_{\beta(u)\to +\infty}\|u\|_\varphi=+\infty, \]
	and
	\[\beta(u)\neq 0 \Rightarrow u\not\equiv 0. \]

Consider $K_\alpha^{\beta,\,\rho}:=\left\{u\in K_\alpha\ :\  \beta(u)<\rho \right\}$. We have the following.
	\begin{enumerate}
		\item[$(1)$] If $\;0\le f^{0}<\mu(L_1)$, then there exists $\rho_{0}>0$ such
		that
		$
		i_{K_\alpha}(T,K_\alpha^{\beta,\,\rho})=1$ for each $\rho\in (0,\rho_{0}].$
		
		\item[$(2)$] If $\;0\le f^{\infty}<\mu(L_1)$, then there exists $R_{0}>0$ such
		that
		$
		i_{K_\alpha}(T,K_\alpha^{\beta,\,R})=1$ for each $R > R_{0}.
		$
		\item[$(3)$] If $\mu\left(L_2 \right)<f_{0}\leq \infty$, then there exists
		$\rho_{0}>0$ such that 
		$
		i_{K_\alpha}(T,K_\alpha^{\beta,\,\rho})=0
		$
		for each 
		$\rho\in (0,\rho_{0}].$
		\item[$(4)$] If $\mu\left(L_2 \right)<f_{\infty} \leq \infty$, then there
		exists $R_{1}>0$ such that 
		$
		i_{K_\alpha}(T,K_\alpha^{\beta,\,R})=0$
		for each $R \geq R_{1}.$
	\end{enumerate}
\end{thm}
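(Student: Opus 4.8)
The plan is to verify, for each of the four items, the corresponding index hypothesis of Lemma \ref{lemind}: parts $(1)$ and $(2)$ will follow from condition $(2)$ of that lemma (giving index $1$), while parts $(3)$ and $(4)$ will follow from condition $(1)$ (giving index $0$). The bridge between the growth numbers $f^{0},f^{\infty},f_{0},f_{\infty}$ and the operators $L_1,L_2$ is obtained by converting each inequality into a pointwise estimate on $f(s,u(s))$. Writing $x=u(s)/\varphi(s)$, so that $|x|\le\|u\|_\varphi$, the definition of $f^{0}$ (resp. $f^{\infty}$) together with the Carath\'eodory bound $(C_3)$ yields, for a suitable $\sigma$ with $\sigma<\mu(L_1)$,
\[ f(s,u(s))\le \sigma\,|u(s)| \qquad\bigl(\text{resp. } f(s,u(s))\le \sigma\,|u(s)|+C\,\varphi(s)\bigr), \]
for a.\,e. $s$, valid once $\|u\|_\varphi$ is small (resp. for all $u$, the constant $C=\|\phi_{r_1}\|_\infty$ absorbing the values of $f$ on a bounded range). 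Dually, $f_{0}$ and $f_{\infty}$ give, for $s\in A$ and a suitable $\sigma>\mu(L_2)$, the lower bounds $f(s,u(s))\ge\sigma|u(s)|$ (resp. $\ge\sigma|u(s)|-C\varphi(s)$). The coercivity conditions on $\beta$ let me translate ``$\|u\|_\varphi$ small/large'' into ``$\beta(u)\le\rho_0$/$\beta(u)\ge R_0$'', and $\beta(u)\ne0\Rightarrow u\not\equiv0$ guarantees $u\ne0$ on each boundary $\partial K_\alpha^{\beta,\rho}$.

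For $(1)$, suppose $Tu=\lambda u$ with $\lambda\ge1$ and $u\in\partial K_\alpha^{\beta,\rho}$ for $\rho\le\rho_0$ small. Passing to $|u|\in P\setminus\{0\}$ (note $|u|\in\widetilde\cC^n_\varphi$, and for $n>0$ one has $u=|u|\in P$ directly, since $k\eta\ge0$ forces $Tu\ge0$) and using the upper estimate,
\[ |u(t)|\le\lambda|u(t)|=|Tu(t)|\le\int_{-\infty}^{\infty}|k(t,s)\eta(s)|\,f(s,u(s))\,\dif s\le\sigma\,L_1|u|(t), \]
so $L_1|u|\curlyeqsucc_{P}\tfrac1\sigma|u|$. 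By Theorem \ref{lowbound}, $r(L_1)\ge1/\sigma$, i.e.\ $\mu(L_1)\le\sigma$, contradicting $\sigma<\mu(L_1)$. Hence $\lambda u\ne Tu$ on the boundary and Lemma \ref{lemind}$(2)$ gives index $1$. For $(2)$ the same computation produces $|u|\le\sigma L_1|u|+h$ with $h=C\,L_1\varphi\ge0$; since $\sigma\,r(L_1)<1$ the operator $\sigma L_1$ admits a convergent Neumann series, and iterating (using that $L_1$ is positive, hence monotone) gives $|u|\le\sum_{j\ge0}\sigma^{j}L_1^{j}h=:h^{\ast}$, a fixed element of $\widetilde\cC^n_\varphi$. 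Thus $\|u\|_\varphi\le\|h^{\ast}\|_\varphi=:M_0$ for every such $u$; choosing $R_0$ with $\beta(u)\ge R_0\Rightarrow\|u\|_\varphi>M_0$ makes $Tu=\lambda u$ impossible on $\partial K_\alpha^{\beta,R}$ for $R>R_0$, and Lemma \ref{lemind}$(2)$ again yields index $1$.

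For $(3)$ (and $(4)$) I take $e=\psi$, the eigenfunction of $L_2$ furnished by Theorem \ref{specrad}, which lies in $P\cap K_\alpha$ and satisfies $L_2\psi=r(L_2)\psi$ with $\psi|_A\not\equiv0$. Suppose $u=Tu+\lambda\psi$ with $\lambda\ge0$ and $u\in\partial K_\alpha^{\beta,\rho}$, $\rho\le\rho_0$. For $t\in A$ condition $(C_6)$ makes the kernel nonnegative, so dropping the integral outside $A$ and inserting the $f_{0}$ lower bound (no correction term, since $\|u\|_\varphi$ small forces every argument small),
\[ u(t)=Tu(t)+\lambda\psi(t)\ge\int_A k(t,s)\eta(s)\,f(s,u(s))\,\dif s+\lambda\psi(t)\ge\sigma\,L_2(|u|)(t)+\lambda\psi(t)\ge0, \]
whence $u\ge0$ on $A$ and therefore $L_2(|u|)=L_2(u)$ on $A$. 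Define $\mu^{\ast}=\sup\{c\ge0:u\ge c\psi\text{ on }A\}$; then $\mu^{\ast}\ge\lambda$, and $\mu^{\ast}<\infty$ because $\psi>0$ on part of $A$. From $u\ge\mu^{\ast}\psi$ on $A$ and monotonicity of $L_2$ we get $L_2u\ge\mu^{\ast}r(L_2)\psi$, so on $A$
\[ u\ge\sigma\,L_2u+\lambda\psi\ge\bigl(\sigma\,r(L_2)\,\mu^{\ast}+\lambda\bigr)\psi, \]
giving $\mu^{\ast}\ge\sigma\,r(L_2)\,\mu^{\ast}+\lambda$. As $\sigma\,r(L_2)>1$ this is impossible once $\mu^{\ast}>0$, and the nondegeneracy built into $(C_6)$ (so that $L_2$ does not annihilate a nonzero nonnegative function on $A$) forces $\mu^{\ast}>0$, while a boundary fixed point ($\lambda=0$) is already a solution. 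Thus $u\ne Tu+\lambda\psi$ on the boundary and Lemma \ref{lemind}$(1)$ gives index $0$.

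The routine parts are the passage from the growth numbers to the pointwise bounds and checking that $|u|$ (or $u|_A$) lies in the relevant cone. I expect the genuine obstacle to be part $(4)$. There the $f_{\infty}$ estimate only holds where $|u(s)/\varphi(s)|$ is large, so it carries the additive term $-C\varphi$ on $A$; unlike in $(2)$, where $\sigma r(L_1)<1$ lets the Neumann series absorb $C\varphi$ outright, here the correction enters the lower estimate for $L_2$ and shifts the $\mu^{\ast}$-inequality to $\mu^{\ast}(\sigma r(L_2)-1)\le C'-\lambda$ (using $L_2\varphi\le D\psi$ on $A$, valid since $\psi$ is bounded below there), which merely bounds $\mu^{\ast}$. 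To recover the contradiction for large $R$ one must show that $\beta(u)\ge R_0$ forces $\min_A(u/\psi)$ to exceed that bound --- a Harnack-type transfer of $\beta$-largeness to largeness on $A$, exactly the role played by the cone's lower bounds in \cite{jwkleig}. Establishing this transfer (equivalently, an a priori bound on the set $\{u:u=Tu+\lambda\psi,\ \lambda\ge0\}$) together with ruling out the degenerate case $\mu^{\ast}=0$ is, I believe, where the main work lies.
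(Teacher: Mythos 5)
Your parts (1) and (2) are essentially the paper's argument: in (1) you invoke Theorem \ref{lowbound} where the paper iterates $|u|\le(\xi L_1)^n|u|$ and lets $n\to\infty$ in $\xi^n\|L_1^n\|^{1/n}$ (the two are interchangeable, since the iteration is just the proof of that theorem), and (2) is the same Neumann-series bound $|u|\le(\Id-\xi L_1)^{-1}C$ followed by choosing $R$ so large that $\beta(u)=R$ forces $\|u\|_\varphi$ past the resulting a priori bound. Your remark about why $|u|$ stays in $\widetilde\cC^n_\varphi$ when $n>0$ is a point the paper does not even mention. The $\lambda>0$ half of (3) is also sound: your maximality argument with $\mu^{\ast}=\sup\{c: u\ge c\psi \text{ on } A\}$ is a compact substitute for the paper's iteration $u\ge n\lambda\varphi_1$ on $A$, and both yield the same contradiction.

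The genuine gaps are the $\lambda=0$ case of (3) and all of (4). When $\lambda=0$ your inequality degenerates to $\mu^{\ast}\ge\sigma r(L_2)\mu^{\ast}$, which is vacuous if $\mu^{\ast}=0$, and neither escape you offer works: $(C_6)$ does not prevent $L_2$ from annihilating a nonzero nonnegative function whose support on $A$ misses that of $k(t,\cdot)\eta(\cdot)$, and even $L_2u\not\equiv0$ would not give $u\ge c\psi$ on $A$ (i.e.\ $\mu^{\ast}>0$) if $u$ vanishes at a point of $A$ where $\psi>0$; and ``a boundary fixed point is already a solution'' does not prove the stated conclusion, which is that the index equals $0$ --- for that one must establish $Tu\ne u$ on $\partial K_\alpha^{\beta,\rho}$, not concede it. The paper closes this case by a different device: it sharpens the bound to $f(t,x\varphi(t))\ge(\mu(L_2)+\varepsilon)x\varphi(t)$, passes to the restricted operator $\bar L$ on $\cC^n(A,\bR)$ (Theorem \ref{barL}), and applies the upper estimate of Theorem \ref{upbound} to get $r(\bar L)\le 1/(\mu(L_2)+\varepsilon)$ against $r(\bar L)\ge r(L_2)$ --- no pointwise comparison $u\ge c\psi$ is required. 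For (4) you stop at diagnosing the difficulty (that $\|u\|_\varphi>R_1$ does not make $u(s)/\varphi(s)>R_1$ for every $s\in A$, so the $f_\infty$ bound cannot be inserted under the integral over all of $A$) and supply no argument, so that item is simply not proved. Your diagnosis is legitimate --- the paper itself dispatches (4) with the single sentence ``proceeding as in the proof of statement (3)'' and never addresses this point --- but as a proof of the theorem as stated, items (3) (for $\lambda=0$) and (4) remain open in your write-up.
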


\begin{proof}		 
    $(1)$ Let $\tau>0$ be such that $f^0< \mu(L_1)-\tau=:\xi$. Then there exists $\til \rho_0\in(0,1)$ such that, for all $x\in[-\til \rho_0,\til \rho_0]$  and almost every $t\in \bR$, we have
	\begin{equation*}
	f(t,x\varphi(t))\le\xi|x|\varphi(t).
	\end{equation*}
	Also, since $\lim_{\beta(u)\to0}\|u\|_\varphi=0$, there is $\rho_0<\til \rho_0$ such that $\|u\|_\varphi<\til \rho_0$ for $u\in \ol{K_\alpha^{\beta,\,\rho_0}}$. Let $\rho\in (0,\rho_0]$.  We prove that $Tu\ne\lambda u$ for $u\in\partial K_\alpha^{\beta,\,\rho}$ and $\lambda\ge 1$, which implies $ i_{K}(T,K_\alpha^{\beta,\,\rho})=1$. In fact, if we assume otherwise, then there exists $u\in\partial K_\alpha^{\beta,\,\rho}$, (that is, $\beta(u)=\rho$ and therefore, $u\not\equiv 0$) and $\lambda\ge1$ such that $\l u=Tu$. Therefore, for $ t\in\bR$,
	\begin{align*}
	|u(t)|\leq &\,\lambda |u(t)|=  |Tu(t)|  =  \left|\int_{-\infty}^\infty k(t,s)\,\eta(s)\,f(s,u(s))\dif s\right|\\
	\le & \int_{-\infty}^\infty|k(t,s)\,\eta(s)|\,f\(s,\frac{u(s)}{\varphi(s)}\varphi(s)\)\dif s \le  \xi\int_{-\infty}^\infty|k(t,s)\,\eta(s)||u(s)|\dif s\\ 
	=&\,\xi\left(L_1|u|\right)(t).
	\end{align*}
	We conclude that $|u|\le\xi L_1|u|$. 	Thus, iterating, we have that
	\begin{align*}
	|u|\le & \xi \,L_1|u|\le\xi\, L_1\(\xi\, L_1|u|\)=\xi^2\,L_1^2|u|\le \dots\le \(\xi\, L_1\)^n|u|.
	\end{align*}
	That is,
	\[\|u\|_\varphi\le \xi^n\,\|L_1^n|u|\,\|_\varphi\]
and, hence,
\[1\le\xi^n\,\frac{\|L_1^n|u|\,\|_\varphi}{\|u\|_\varphi}\le \xi^n\,\| L_1^n\|_\varphi.\]
	Taking the $n$-th square root and the limit when $n\to\infty$,
	\[1\le \xi \, \left(\|L_1^n\|_\varphi\right)^\frac{1}{n}\to \xi\,r(L_1),\]
	a contradiction. \par

	$(2)$ Let $\tau\in\bR^+$ such that $f^\infty<\mu(L_1)-\tau=:\xi$. Then there exists $R_1>0$ such that for every $|x|\ge R_1$ and almost every $t\in \bR$
	\begin{displaymath}
	f(t,x\varphi(t))\le \xi\,|x|\varphi(t) .
	\end{displaymath}
	Also, by $(C_3)$ there exists $\phi_{R_1}\in L^\infty(\bR)$ such that \[\frac{f(t,x\varphi(t))}{\varphi(t)}\le\phi_{R_1}(t),\] for all $x\in[-R_1,R_1]$ and almost every $t\in \bR$. Hence,
	\begin{equation}\label{supest}
	f(t,x\varphi(t))\le\xi|x|\varphi(t) +\varphi(t)\phi_{R_1}(t)\ \text{for all}\  x\in \bR\ \text{and almost every }\ t\in \bR.
	\end{equation}
	Denote by $\Id$  the identity operator and observe that $\Id-\xi L_1$  is invertible since $\xi L_1$ has spectral radius less than one. Furthermore, by the Neumann series expression, 
	\begin{displaymath}
(\Id-\xi L_1)^{-1}=\sum_{k=0}^\infty(\xi L_1)^k
	\end{displaymath}
	and therefore,  $(\Id-\xi L_1)^{-1}$ maps $P$ into $P\cap K_\alpha$, since $L_1$ does.
	
Since $\phi_{R_1}\in L^\infty(\bR)$, 
	\[C(t):=\int_{-\infty}^{\infty} |k(t,s)\,\eta(s)|\,\varphi(s) \,\phi_{R_1}(s) \dif s\le \left\|\phi_{R_1}\right\|_{\infty} \int_{-\infty}^{\infty} |k(t,s)\,\eta(s)|\,\varphi(s) \dif s,\]
	and so, from $(C_2)$, it is clear that $C\in \widetilde \cC^n_\varphi$. Furthermore, since $C(t)\ge 0$ for all $t\in\bR$, $C\in P$. Therefore $(\Id-\xi L_1)^{-1} C \in P\cap K_\alpha$ and $R_0:= \| (\Id-\xi L_1)^{-1}C\|_\varphi<+\infty$.
	
	Because $\lim_{\beta(u)\to +\infty}\|u\|_\varphi=+\infty$, there exists $R_2>R_0$ such that $\|u\|_\varphi>R_1$ for every $u\in\partial K_\alpha^{\beta,\,R}$ for $R>R_2$. Now we prove that for each $R>R_2$, $T u\ne\lambda u$ for all $u\in\partial K_\alpha^{\beta,\,R}$ and $\lambda\ge 1$, which implies $ i_{K}(T,K_\alpha^{\beta,\,R})=1$. Assume otherwise: there exists $u\in\partial K_\alpha^{\beta,\,R}$ and $\lambda\ge 1$ such that $\lambda u=Tu$.  Taking into account the inequality \eqref{supest}, we have, for $t\in \bR$,
	\begin{align*}
	|u(t)|&\leq\lambda |u(t)|=  |Tu(t)|  =  \left|\int_{-\infty}^{\infty}k(t,s)\,\eta(s)\,f(s,u(s)) \, \dif s \right|\\ & \le  \int_{-\infty}^{\infty}|k(t,s)\,\eta(s)|\,f\(s,\frac{u(s)}{\varphi(s)}\varphi(s)\)\, \dif s  
	\le  \int_{-\infty}^{\infty}|k(t,s)\,\eta(s)|\left[\xi\left|\frac{u(s)}{\varphi(s)}\right|\varphi(s) +\varphi(s)\phi_{R_1}(s)\right]\, \dif s
	\\ & \le  \xi\int_{-\infty}^{\infty}|k(t,s)\,\eta(s)|\,|u(s)|\, \dif s+C(t)=\xi\,L_1 |u|(t)+C(t),
	\end{align*}
	which implies 
	\begin{displaymath}
	(\Id-\xi L_1)|u|(t)\le C(t).
	\end{displaymath}
	Since $(\Id-\xi L_1)^{-1}$ is non-negative, we have
	\begin{displaymath}
	|u(t)|\le (\Id-\xi L_1)^{-1} C(t)
	\end{displaymath}
	and, consequently,
	\[\|u\|_\varphi \le \|(\Id-\xi L_1)^{-1} C\|_\varphi =R_0. \]
	Therefore, we have $\|u\|_\varphi\le R_0<R$, a contradiction.\par
	$(3)$ There exists $\rho_0>0$ such that  for all $x\in(0,\rho_0]$ and  all $t\in A$ we have
	\begin{displaymath}
	f(t,x\varphi(t))\geq \mu\left(L_2 \right)\,x\varphi(t).
	\end{displaymath}
	Since  $\lim_{\beta(u)\to 0}\|u\|_\varphi=0$, there exists $\rho_1\in(0,\rho_0]$ such that $\|u\|_\varphi <\rho_0$ for every $u\in K_\a^{\beta,\,\rho}$, $\rho\in(0,\rho_1]$.  Let $\rho\in(0,\rho_1]$ be fixed. Let us prove that
	$u\ne Tu+\lambda\varphi_1$ for all $u$ in $\partial K_\a^{\beta,\,\rho}$ and $\lambda\geq 0$,
	where $\varphi_1\in K_\a\cap P$ is the eigenfunction of $L_2 $ with $\|\varphi_1\|=1$ corresponding to the eigenvalue $1/\mu\left(L_2 \right)$ of which the existence is proved in Theorem \ref{specrad}. This implies that $ i_{K}(T,K_{\rho})=0$.\par
	Assume, on the contrary, that there exist $u\in\partial K_\a^{\beta,\,\rho}$ and $\lambda\geq 0$ such that $u=Tu+\lambda\varphi_1$. We distinguish two cases. Firstly, we discuss the case $\lambda>0$. We have, for $t\in A$ in the conditions of $(C_6)$,
	\begin{align*}
	u(t)=&\, \int_{-\infty}^{\infty} k(t,s)\,\eta(s)f(s,u(s))\dif s +\lambda \, \varphi_1(t) \ge \int_A k(t,s)\,\eta(s) \,f\(s,\frac{u(s)}{\varphi(s)}\varphi(s)\)\dif s +\lambda \, \varphi_1(t)
	\\ \geq & \,\mu\left(L_2 \right) \int_A k(t,s)\,\eta(s)\,u(s)\dif s+\lambda \varphi_1(t) =\mu\left(L_2 \right) L_2 u(t)+\lambda \varphi_1(t)\ge \lambda\varphi_1(t).
	\end{align*}
	
	Hence, \[(L_2 u)(t)\ge\lambda (L_2 \varphi_1)(t)= \dfrac{\lambda}{\mu\left(L_2 \right)}\varphi_1(t),\] in such a way that we obtain 
	\begin{displaymath}
	u(t)\ge\mu\left(L_2 \right) L_2 u(t)+\lambda\varphi_1(t)\ge2\lambda\varphi_1(t),\ \text{   for   } t\in A.
	\end{displaymath}
	
	By iteration, we deduce that, for $ t\in A$, we get
	\begin{displaymath}
	u(t)\ge n\lambda\varphi_1(t)  \text{   for every  } n\in\bN,
	\end{displaymath}
	a contradiction because $u(t)$ is finite and  $\varphi_1|_A\not\equiv0$.\par

	Now we consider the case $\lambda=0$.  Let $\varepsilon>0$ be such that for all such that  for all $x\in(0,\rho_0]$ and  and almost every $t\in A$ we have
	\begin{equation*}
	f(t,x\varphi(t))\geq (\mu\left(L_2 \right)+\varepsilon)\, x\varphi(t).
	\end{equation*}
	We have, for $t\in A$, 
	\begin{equation*}
	u(t)=\int_{-\infty}^{\infty} k(t,s)\,\eta(s)\,f(s,u(s)) \dif s \geq\int_A \left(k(t,s)\,\eta(s)\right)^+\,f(s,u(s)) \dif s \geq(\mu\left(L_2 \right)+\varepsilon)\, L_2 u(t).
	\end{equation*}
	
	Since $L_2 \varphi_1(t)=r\left(L_2 \right)\varphi_1(t)$ for $t\in\bR$, we have, for $t\in A$,
	\begin{displaymath}
	\bar{L} \varphi_1(t)=L_2 \varphi_1(t)=r\left(L_2 \right)\varphi_1(t),
	\end{displaymath}
	and we obtain $r(\bar{L})\geq r\left(L_2 \right)$. On the other hand, we have, for $t\in A$, 
	\begin{equation*}
	u(t)\geq(\mu\left(L_2 \right)+\varepsilon)\, L_2 u(t)=(\mu\left(L_2 \right)+\varepsilon) \, \bar{L} u(t),
	\end{equation*}
	where $u(t)>0$. Thus, using Theorem~\ref{upbound}, we have  $r(\bar{L})\leq \dfrac{1}{\mu\left(L_2 \right)+\varepsilon}$
	and therefore $r\left(L_2 \right)\leq \dfrac{1}{\mu\left(L_2 \right)+\varepsilon}$. This gives $\mu\left(L_2 \right)+\varepsilon\leq \mu\left(L_2 \right)$, a contradiction. \par
	
	$(4)$ Let $R_1>0$ be such that 
	\begin{displaymath}
	f(t,x\varphi(t))>\mu\left(L_2 \right) \, x\varphi(t)
	\end{displaymath}
	for all $x>R_1$ and  all $t\in A$. 
	
	Moreover, since $\lim_{\beta(u)\to +\infty}\|u\|_\varphi=+\infty$, there exists $R_2$ such that $\|u\|>R_1$ for every $u\in\partial K_\alpha^{\beta,\,R}$ for $R>R_2$.
	
	Let  $R\geq R_2$. Now, proceeding as in the proof of the statement~$(3)$, it is easy to prove that $u\ne Tu+\lambda\varphi_1$ for all $u$ in $\partial K_\a^{\beta,\,R}$ and $\lambda\geq 0$, which  implies $ i_{K}(T,K_\a^{\beta,\,R})=0$. 
\end{proof}

The following Theorem, in the line of \cite{jwmz-na}, applies the index results in Theorem \ref{thmindeig}  in order to get some results on existence of nontrivial solutions for the equation \eqref{eqthamm}.

\begin{thm} \label{thmones}
	Assume that conditions $(C_1)-(C_{10})$ hold. Suppose also that one of the following conditions is satisfied 
	\begin{enumerate}
		\item[$(T_{1})$] $0\le f^{0}<\mu(L_1)$ and $\mu\left(L_2 \right)<f_{\infty}\le \infty$.
		\item[$(T_{2})$] $0\le f^{\infty}<\mu(L_1)$ and $\mu\left(L_2 \right)<f_{0}\le \infty$.
	\end{enumerate}
	Then the integral equation~\eqref{eqthamm} has at least one non-trivial solution in $K_\alpha$.
\end{thm}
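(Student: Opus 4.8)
The plan is to deduce the existence of a nontrivial fixed point of $T$ in $K_\alpha$ by feeding the four index computations of Theorem~\ref{thmindeig} into the localization property recorded in Lemma~\ref{lemind}~(4). In both cases the strategy is the same: exhibit two concentric sets $K_\alpha^{\beta,\rho}$ (small) and $K_\alpha^{\beta,R}$ (large) on which the fixed point index of $T$ takes the values $0$ and $1$ in opposite orders, which forces $T$ to have a fixed point in the annular region $K_\alpha^{\beta,R}\setminus\overline{K_\alpha^{\beta,\rho}}$.

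First I would treat case $(T_1)$. Since $0\le f^{0}<\mu(L_1)$, part~(1) of Theorem~\ref{thmindeig} yields $\rho_0>0$ with $i_{K_\alpha}(T,K_\alpha^{\beta,\rho})=1$ for every $\rho\in(0,\rho_0]$; and since $\mu(L_2)<f_{\infty}\le\infty$, part~(4) yields $R_1>0$ with $i_{K_\alpha}(T,K_\alpha^{\beta,R})=0$ for every $R\ge R_1$. Fixing any $\rho\in(0,\rho_0]$ and any $R\ge R_1$ with $\rho<R$, the sublevel sets nest as $\overline{K_\alpha^{\beta,\rho}}\subset K_\alpha^{\beta,R}$ (the boundary of $K_\alpha^{\beta,\rho}$ being $\{u:\beta(u)=\rho\}$, exactly as used inside the proof of Theorem~\ref{thmindeig}). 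Thus Lemma~\ref{lemind}~(4), in its ``$i_{K_\alpha}(F,\Omega_K)=0$ and $i_{K_\alpha}(F,\Omega_K^1)=1$'' form with $\Omega_K=K_\alpha^{\beta,R}$ and $\Omega_K^1=K_\alpha^{\beta,\rho}$, delivers a fixed point $u\in K_\alpha^{\beta,R}\setminus\overline{K_\alpha^{\beta,\rho}}$.

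Case $(T_2)$ is completely symmetric: now $0\le f^{\infty}<\mu(L_1)$ together with part~(2) give $i_{K_\alpha}(T,K_\alpha^{\beta,R})=1$ for large $R$, while $\mu(L_2)<f_{0}\le\infty$ together with part~(3) give $i_{K_\alpha}(T,K_\alpha^{\beta,\rho})=0$ for small $\rho$; Lemma~\ref{lemind}~(4) (this time with the index values $1$ and $0$ carried by the large and small sets, respectively) again yields a fixed point in the annulus. In either case the fixed point $u$ lies outside $\overline{K_\alpha^{\beta,\rho}}$, hence $\beta(u)\ge\rho>0$, and the standing hypothesis $\beta(u)\ne0\Rightarrow u\not\equiv0$ guarantees that $u$ is a nontrivial solution of \eqref{eqthamm}.

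The substantive work is already absorbed into earlier results: the compactness and invariance of $T$ on $K_\alpha$ (so that $i_{K_\alpha}(T,\cdot)$ is defined) and the four index values of Theorem~\ref{thmindeig}. Consequently the only points left to check here are bookkeeping ones needed to apply Lemma~\ref{lemind}~(4): that each $K_\alpha^{\beta,\rho}$ is open and bounded, contains $0$, and nests properly. Boundedness follows from $\lim_{\beta(u)\to+\infty}\|u\|_\varphi=+\infty$, the inclusion $0\in K_\alpha^{\beta,\rho}$ from $\beta(0)=0$, and the closure nesting from $\rho<R$. I expect no genuine obstacle beyond confirming these routine facts; the main conceptual step is simply selecting the correct orientation of the index pair in each of the two cases.
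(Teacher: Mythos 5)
Your proposal is correct and follows essentially the same route as the paper: it combines the index computations of Theorem~\ref{thmindeig} (parts (1) and (4) for $(T_1)$, parts (2) and (3) for $(T_2)$) with Lemma~\ref{lemind}(4) applied to a nested pair of sublevel sets $K_\alpha^{\beta,\rho}\subset K_\alpha^{\beta,R}$. The only detail the paper makes explicit that you leave implicit is the choice of the functional $\beta$ --- since Theorem~\ref{thmones} does not posit one, the paper takes $\beta(u)=\|u\|_\varphi$, which trivially satisfies the hypotheses of Theorem~\ref{thmindeig} and makes the rest of your argument go through verbatim.
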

\begin{proof}
	We will prove $(T_1)$, being $(T_2)$ analogous.
	
	Take $\beta(u)=\|u\|_\varphi$. Clearly $\beta$ is in the conditions of Theorem \ref{thmindeig}. Then, the existence of $\rho_0>0$ and $R_1>0$ such that $i_{K_\alpha}(T, K_\alpha^{\beta,\rho})=1$ for each $\rho\in(0,\rho_0]$ and $i_{K_\alpha}(T, K_\alpha^{\beta,R})=0$ for each $R\ge R_1$ is ensured. 
	
	Therefore, if we choose $\rho\le \rho_0$ and $R\ge R_1$ such that $\rho<R$, $K_\alpha^{\beta,\rho} \subset K_\alpha^{\beta,R}$ and from (4) in Lemma \ref{lemind} we deduce that $T$ has a fixed point in $K_\alpha^{\beta,R}\setminus \overline{K_\alpha^{\beta,\rho}}$.
\end{proof}

The following Lemma establishes some relations between the characteristic values of some of the considered operators.
\begin{lem} It holds that
$\widetilde M(A)\geq \mu\left(L_2 \right)\geq\mu( L_1)$.\end{lem}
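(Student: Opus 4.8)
The statement splits into two spectral-radius comparisons, since by definition $\widetilde M(A)=\widetilde M$, $\mu(L_2)=1/r(L_2)$ and $\mu(L_1)=1/r(L_1)$. Concretely, $\widetilde M(A)\ge\mu(L_2)$ is equivalent to $r(L_2)\ge 1/\widetilde M$, and $\mu(L_2)\ge\mu(L_1)$ is equivalent to $r(L_1)\ge r(L_2)$. The plan is to establish each of these inequalities for the spectral radii separately, in both cases by producing an inequality of the form $\cL v\curlyeqsucc_K\lambda_0 v$ with $\lambda_0>0$ and $v\in K\setminus\{0\}$, and then invoking Theorem \ref{lowbound}.

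For $r(L_1)\ge r(L_2)$ I would first note that, since $(k(t,s)\,\eta(s))^+\le|k(t,s)\,\eta(s)|$ and $A\subset\bR$, for every $u\in P$ one has the pointwise domination $L_2\,u(t)\le L_1\,u(t)$, that is $L_1u-L_2u\in P$. Let $\phi_2\in P\setminus\{0\}$ be the eigenfunction with $L_2\phi_2=r(L_2)\,\phi_2$ provided by Theorem \ref{specrad}. Then $L_1\phi_2\ge L_2\phi_2=r(L_2)\,\phi_2$, so $L_1\phi_2\curlyeqsucc_P r(L_2)\,\phi_2$ with $r(L_2)>0$, and Theorem \ref{lowbound} yields $r(L_1)\ge r(L_2)$, hence $\mu(L_2)\ge\mu(L_1)$.

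For $r(L_2)\ge 1/\widetilde M$ I would pass through the auxiliary operator $\bar L$. Taking the constant function $w\equiv 1\in P_A$, condition $(C_6)$ gives $\bar Lw(t)=\int_A k(t,s)\,\eta(s)\,\dif s\ge 1/\widetilde M=(1/\widetilde M)\,w(t)$ for every $t\in A$, so $\bar Lw\curlyeqsucc_{P_A}(1/\widetilde M)\,w$, and Theorem \ref{lowbound} applied on $\cC^n(A,\bR)$ with the cone $P_A$ (where $\bar L$ is compact and leaves $P_A$ invariant by Theorem \ref{barL}) gives $r(\bar L)\ge 1/\widetilde M$. It then remains to show $r(L_2)\ge r(\bar L)$, and here lies the main obstacle: $\bar L$ and $L_2$ act on different spaces, and the naive extension of an $\bar L$-eigenfunction to $\bR$ is in general \emph{not} an $L_2$-eigenfunction, since $L_2$ need not reproduce it outside $A$. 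I would resolve this by exploiting that $L_2u$ depends only on $u|_A$ together with the fact that $(k\,\eta)^+=k\,\eta$ on $A$. Starting from $\psi_A\in P_A\setminus\{0\}$ with $\bar L\psi_A=r(\bar L)\,\psi_A$ (which exists, with $r(\bar L)>0$, by the theorem stating that $r(\bar L)$ is an eigenvalue of $\bar L$ with eigenfunction in $P_A$), I set $\chi:=L_2(i\psi_A)\in\widetilde\cC^n_\varphi$. Then $\chi(t)=\int_A(k(t,s)\,\eta(s))^+\psi_A(s)\,\dif s$, so $\chi|_A=\bar L\psi_A=r(\bar L)\,\psi_A$; since $L_2\chi(t)=\int_A(k(t,s)\,\eta(s))^+\chi(s)\,\dif s$ uses $\chi$ only on $A$, substituting gives directly $L_2\chi=r(\bar L)\,\chi$, while $\chi(t_0)=r(\bar L)\,\psi_A(t_0)>0$ at any $t_0\in A$ with $\psi_A(t_0)>0$ shows $\chi\ne 0$. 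Thus $r(\bar L)$ is an eigenvalue of the compact operator $L_2$, whence $r(\bar L)\le r(L_2)$ (and, together with the earlier $r(\bar L)\ge r(L_2)$, in fact $r(L_2)=r(\bar L)$). Combining, $r(L_2)\ge r(\bar L)\ge 1/\widetilde M$, i.e. $\widetilde M(A)\ge\mu(L_2)$, and the full chain $\widetilde M(A)\ge\mu(L_2)\ge\mu(L_1)$ follows.
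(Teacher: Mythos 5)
Your proof is correct, but it departs from the paper's argument on both halves of the chain. For $\mu(L_2)\ge\mu(L_1)$ the paper goes the other way round: it takes the eigenfunction $\phi$ of $L_1$, notes $L_2\phi\le L_1\phi=r(L_1)\,\phi$, and invokes the upper bound of Theorem \ref{upbound} to get $r(L_2)\le r(L_1)$; you instead feed the $L_2$-eigenfunction into the lower bound of Theorem \ref{lowbound}. Your variant is arguably cleaner, since Theorem \ref{upbound} requires the comparison element to be a quasi-interior point of the cone, a hypothesis the paper does not verify for $\phi$, whereas Theorem \ref{lowbound} only needs $\phi_2\in P\setminus\{0\}$. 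For $\widetilde M(A)\ge\mu(L_2)$ the paper argues directly with the normalized $L_2$-eigenfunction $\phi$: restricting to $A$, bounding $\int_A k(t,s)\,\eta(s)\,\phi(s)\,\dif s$ from below by $\min_{t\in A}\phi(t)/\widetilde M$, and cancelling $\min_{t\in A}\phi(t)$ from both sides --- a step that implicitly requires $\min_{t\in A}\phi>0$, which is not checked. Your detour through $\bar L$ (lower-bounding $r(\bar L)$ by $1/\widetilde M$ via the constant function on $A$, then transporting the $\bar L$-eigenfunction back to an $L_2$-eigenfunction $\chi=L_2(i\psi_A)$, using that $L_2 u$ only sees $u|_A$ and that $(k\,\eta)^+=k\,\eta$ on $A$) avoids any strict-positivity assumption, and as a bonus yields $r(L_2)=r(\bar L)$, complementing the one-sided comparison $r(\bar L)\ge r(L_2)$ that the paper proves separately. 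Both routes rest on the same ingredients (Theorems \ref{lowbound}, \ref{upbound}, \ref{specrad}, \ref{barL}), so nothing new is needed; yours is somewhat longer on the second inequality but closes two small gaps in the published argument.
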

\begin{proof}
	First, we prove that $\mu\left(L_2 \right)\geq\mu( L_1)$. Let $\phi$ be an eigenfunction of $L_1$ related to the eigenvalue $r(L_1)$. We have that
	\begin{equation*}\begin{split}
	r(L_1)\,\phi(t)&=L_1\phi(t)= \int_{-\infty}^{\infty}\left|k(t,s)\,\eta(s) \right|\phi(s) \dif s \ge \int_A \left|k(t,s)\,\eta(s) \right|\phi(s) \dif s \\& \ge \int_A \left(k(t,s)\,\eta(s) \right)^+\phi(s) \dif s= L_2  \phi(t). 
	\end{split}	\end{equation*}
	Therefore, Theorem \ref{upbound} yields that $r\left(L_2 \right)\le r(L_1)$ or, equivalently, $\mu\left(L_2 \right)\geq\mu( L_1)$.
	
	Now we prove $\widetilde M(A)\geq \mu\left(L_2  \right)$.
	Let $\phi \in P\cap K_\alpha$ be a corresponding eigenfunction of norm $1$ of $1/\mu \left(L_2 \right)$ for the operator $L_2 $, that is $\phi=\mu\left(L_2 \right)L_2  (\phi)$ and $\Vert \phi\Vert=1$.
	Then, for $t\in  A $, we have
	\begin{equation*}
	\phi(t)= \mu\left(L_2 \right)\int_a^b k(t,s)\,\eta(s)\,\phi(s) \dif s\geq\mu\left(L_2 \right)\min_{t\in  A }\phi(t)\int_a^b k(t,s)\,\eta(s) \dif s
	.\end{equation*}
	Taking the infimum over $ A $, we obtain 
	\begin{equation*}
	\min_{t\in  A }\phi(t)\geq  \mu\left(L_2  \right) \min_{t\in  A }\phi(t)/ \widetilde M(A),
	\end{equation*}
	that is, $\widetilde M(A)\geq \mu\left(L_2  \right)$.
\end{proof}

\begin{rem}
		We note that the previous results could also be formulated for $\til\cC_\varphi([a,+\infty))$ or $\til\cC_\varphi((-\infty,a])$ (with obvious notation) for any $a\in\bR$ (see \cite{Somoza}).
\end{rem}

\section{An example}
We will consider now the problem
\begin{equation*}
Tu(t)=\int_{-\infty}^{\infty} e^{-\frac{|s|}{2}}\,\sin t\, \sqrt{|u(s)|} \, \sin^2 s\, \dif s,
\end{equation*}
that is, $k(t,s)=e^{-\frac{|s|}{2}}\,\sin t$, $\eta(s)=1$ and $f(s,y)=\sqrt{|y|} \, \sin^2 s$.

We will take 
\[\varphi(t)=|t|,\]
and
\[\alpha(u)=\min_{t\in\left[\frac{\pi}{4},\frac{3\,\pi}{4}\right]} u(t)-\frac{\sqrt{2}}{2}\,\|u\|_\infty.\]

We will verify that conditions $(C_1)-(C_8)$ are satisfied for the case $n=0$:

\begin{itemize}
\item[$(C_1)$] First of all, since $k(\cdot,s)\in\cC(\bR)$ and there exist
\[\lim\limits_{t\to\pm\infty} \frac{k(t,s)}{\varphi(t)} = \lim\limits_{t\to\pm\infty} \frac{e^{-\frac{|s|}{2}}\,\sin t}{|t|}=0, \]
it is clear that $k(\cdot,s)\in\widetilde{\cC}_\varphi$ for all $s\in\bR$.

Moreover, for every $\varepsilon>0$ there exists $\delta>0$ such that when $|t_1-t_2|<\delta$,
\begin{itemize}
	\item[(i)] \[\left|\frac{k(t_1,s)}{\varphi(t_1)}-\frac{k(t_2,s)}{\varphi(t_2)} \right|= \left|\frac{e^{-\frac{|s|}{2}}\, \sin t_1}{|t_1|}-\frac{e^{-\frac{|s|}{2}}\, \sin t_2}{|t_2|} \right| \le \varepsilon\,e^{-\frac{|s|}{2}}, \]
\end{itemize}
\begin{itemize}
	\item[(ii)]
\[\left|\frac{|k(t_1,s)|}{\varphi(t_1)}-\frac{|k(t_2,s)|}{\varphi(t_2)} \right|= \left|\frac{e^{-\frac{|s|}{2}}\, |\sin t_1|}{|t_1|}-\frac{e^{-\frac{|s|}{2}}\, |\sin t_2|}{|t_2|} \right| \le \varepsilon\,e^{-\frac{|s|}{2}}, \]
\end{itemize}
and
\begin{itemize}
	\item[(iii)]
\[\left|\frac{(k(t_1,s)^+}{\varphi(t_1)}-\frac{(k(t_2,s))^+}{\varphi(t_2)} \right|= \left|\frac{e^{-\frac{|s|}{2}}\, (\sin t_1)^+}{|t_1|}-\frac{
e^{-\frac{|s|}{2}}\, (\sin t_2)^+}{|t_2|} \right| \le \varepsilon\,e^{-\frac{|s|}{2}},\]
\end{itemize}
so we will take $\omega_0(s)=e^{-\frac{|s|}{2}}$.
\item[$(C_2)$] Clearly, it holds that $\omega_0\,\varphi \in \Lsp{1}(\bR)$. Also,
\[\frac{1}{\varphi(t)}\int_{-\infty}^{\infty} |k(t,s)|\,\varphi(s)\,\dif s = \frac{|\sin t|}{|t|}\int_{-\infty}^{\infty} e^{-\frac{|s|}{2}}\,|s|\,\dif s =8\, \frac{|\sin t|}{|t|}  \in \Lsp{\infty}(\bR).\]

Moreover, in this case
\[z_{(\pm)}(s)=\lim\limits_{t\to\pm\infty}\frac{e^{-\frac{|s|}{2}}\,|\sin t|}{|t|}=0,\]
\[M(s)=\sup_{t\in\bR} \,\frac{e^{-\frac{|s|}{2}}\,|\sin t|}{|t|} =e^{-\frac{|s|}{2}},\]
and it holds that $z_{(\pm)}\,\varphi, \, M\,\varphi \in \Lsp{1}(\bR)$.

\item[$(C_3)$] It is clear that $f(\cdot,y)$ is measurable for each fixed $y\in\bR$ and $f(t,\cdot)$ is continuous for a.\,e. $t\in \bR$. Finally, for each $r>0$, there exists $\phi_{r}(t)=\frac{\sqrt{r}\,\sin^2 t}{\sqrt{|t|}} \in \Lsp{\infty}(\bR)$ such that \[\frac{f(t,x\varphi(t))}{\varphi(t)}=\frac{\sqrt{|x\,t|}\,\sin^2 t} {|t|}\le \phi_{r}(t),\] for all $x\in [-r,r]$ and a.\,e. $t\in \bR$.

\item[$(C_4)$] In this case, \[\alpha(|k(\cdot,s)|)=\min_{t\in\left[\frac{\pi}{4},\frac{3\,\pi}{4}\right]} |k(t,s)|-\frac{\sqrt{2}}{2}\,\|k(\cdot,s)\|_\infty = e^{-\frac{|s|}{2}}\,\min_{t\in\left[\frac{\pi}{4},\frac{3\,\pi}{4}\right]} |\sin t|-\frac{\sqrt{2}}{2}\,e^{-\frac{|s|}{2}} =0. \]

\item[$(C_5)$] It is clear that $\alpha(|k(t,s)|)\,\varphi(s)\in \Lsp{1}(\bR)$. Moreover, for all $u\in P$, it holds that
\begin{equation*}\begin{split} 
\alpha(L_1 u) =&\min_{t\in\left[\frac{\pi}{4},\frac{3\,\pi}{4}\right]} \int_{-\infty}^{\infty}|k(t,s)|\,u(s)\,\dif s - \frac{\sqrt{2}}{2} \, \left\| \int_{-\infty}^{\infty}|k(t,s)|\,u(s)\,\dif s\right\|_\infty \\
&\, \ge \int_{-\infty}^{\infty}\min_{t\in\left[\frac{\pi}{4},\frac{3\,\pi}{4}\right]} |k(t,s)|\,u(s)\,\dif s -\frac{\sqrt{2}}{2} \, \int_{-\infty}^{\infty}\|k(t,s)\|_\infty\,u(s)\,\dif s \\ & \,=\int_{-\infty}^{\infty}\alpha(|k(\cdot,s)|)\,u(s)\,\dif s.
\end{split}\end{equation*}

\item[$(C_6)$] We can take $A=\left[\frac{\pi}{4},\frac{3\,\pi}{4}\right]$. For such $A$, we obtain
\[\frac{1}{\widetilde{M}(A)}=\inf_{t\in A}\int_{A} e^{-\frac{|s|}{2}}\, \sin t \dif s = \inf_{t\in A} \, \left\{ 2\,e^{\frac{-3\,\pi}{8}} \,\left(-1+e^{\frac{\pi}{4}}\right)\, \sin t\right\} = \sqrt{2}\,e^{\frac{-3\,\pi}{8}} \,\left(-1+e^{\frac{\pi}{4}}\right)>0.\]

\item[$(C_7)$]  It is analogous to $(C_4)$. The same occurs to $(C_9)$.

\item[$(C_8)$] It is analogous to $(C_5)$. The same occurs to $(C_{10})$.

Finally, we obtain the following values for the limits $f^\infty$ and $f_0$:
\[f^\infty=\varlimsup_{|x| \to+\infty}\frac{\sup\limits_{t \in \bR} \, \dfrac{\sqrt{|x|} \, \sin^2 t}{\sqrt{|t|}}}{|x|} \le \varlimsup_{|x| \to+\infty} \frac{\sqrt{|x|}}{|x|}=0, \]
and so $f^\infty=0$. Analogously,
\[f_0=\varliminf_{|x| \to 0}\frac{\inf\limits_{t \in A} \, \dfrac{\sqrt{|x|} \,\sin^2 t}{\sqrt{|t|}}}{|x|}= \varliminf_{|x| \to 0} \frac{\sqrt{|x|}}{\sqrt{3\,\pi}\,|x|}=+\infty.\]

On the other hand, since both $r(L_1)$ and $r(L_2)$ are positive (as it has been proved in Theorem \ref{specrad}), it holds that $\mu(L_1)>0$ and $\mu(L_2)<+\infty$.

Thus, from $(T_2)$ in Theorem \ref{thmones}, we deduce that our problem has at least a non-trivial solution in $K_\alpha \subset \cC_\varphi$.
\end{itemize}

\end{document}